\documentclass[11pt]{amsart}
\usepackage{amssymb}
\usepackage{amsmath}
\usepackage{amsfonts}
\usepackage{graphicx}

\usepackage[total={17cm,22cm},top=2.5cm, left=2.3cm]{geometry}
\usepackage{hyperref}
    \usepackage{aeguill}
    \usepackage{type1cm}
\usepackage{enumitem}
\usepackage{crossreftools}

\theoremstyle{plain}
\newtheorem{thm}{Theorem}[section]

\newtheorem{lemma}[thm]{Lemma}

\newtheorem{remark}[thm]{Remark}

\newtheorem{theorem}[thm]{Theorem}

\numberwithin{equation}{section}
\newcommand{\N}{\mathbb{N}}

\newcommand{\R}{\mathbb{R}}

\newcommand{\Rn}{\mathbb{R}^n}

\DeclareMathOperator{\lip}{Lip\,\!}

\DeclareMathOperator{\sop}{supp\,\!}

\usepackage[usenames,dvipsnames]{color}

\usepackage[dvipsnames]{xcolor}

\newcommand{\ud}[0]{\,\mathrm{d}}

\begin{document}
\title[The sharp Whitney Extension theorem for convex $C^1$ Lipschitz functions]{The sharp Whitney Extension theorem for convex $C^1$ Lipschitz functions}

\author[Carlos Mudarra]{Carlos Mudarra}
\address{Department of Mathematical Sciences, Norwegian University of Science and Technology, 7941 Trondheim, Norway}
\email{carlos.mudarra@ntnu.no}

\date{\today}

\makeatletter
\@namedef{subjclassname@2020}{{\mdseries 2020} Mathematics Subject Classification}
\makeatother

\keywords{convex function, Lipschitz function, sharp constant, Whitney extension theorem}

\subjclass[2020]{26A16, 26B05, 26B25, 52A41, 54C20, 90C25}


\begin{abstract}
For an arbitrary set $E \subset \mathbb{R}^n$, and functions $f:E \to \mathbb{R}$, $G: E\to \mathbb{R}^n$ with $G$ bounded, we construct $C^1(\mathbb{R}^n)$ convex extensions $(F, \nabla F)$ of $(f,G)$ with the sharp Lipschitz constant
 $$
 \mathrm{Lip}(F) = \sup_{x\in E} |G(x)|,
 $$
 provided that $(f,G)$ satisfies the pertinent necessary and sufficient conditions for $C^1$ convex, and Lipschitz extendability. Also, these extensions can be constructed with prescribed global behavior in terms of directions of coercivity.  
\end{abstract}

\maketitle

\section{Introduction and main results}
Given a set $A \subset \R^n$ and a function $g: A \to \R,$ we will denote the Lipschitz constant of $g$ on $A$ by
$$
\lip(g,A) : = \inf\lbrace C \geq 0 \, : \, |g(x)-g(y)| \leq C |x-y| \: \text{ for all } \: x,y \in A \rbrace .
$$
We will abbreviate by $\lip(g) =\lip(g,\R^n) $ in the case where $A= \R^n.$ If $L \geq 0,$ we will say that $g$ is $L$-Lipschitz on $A$ when $\lip(g,A) \leq L. $ Also, by a $1$-jet on a subset $E$ of $\R^n$ we understand a couple $(f,G)$ of functions $f:E \to \R$, $G: E \to \R^n.$

The classical Whitney Extension Theorem \cite{W34} for $C^1$ provides necessary and sufficient conditions on a jet $(f,G)$ for the existence of $C^1(\R^n)$ functions $F$ with $(F,\nabla F) = (f,G)$ on $E.$ We then say that $F$ (or $(F,\nabla F)$) is an extension of the jet $(f,G).$

\medskip

In this paper, we prove a Whitney Extension Theorem for \textit{convex} $C^1(\R^n)$ Lipschitz functions, where the extensions have the best possible Lipschitz constants $\lip(F) = \|G\|_{L^\infty(E)}.$

In last decades, plenty of Whitney-type problems have been thoroughly studied and solved for various classes of jets and functions. This includes results for $C^m$ or $C^{m,\omega}$ functions \cite{ALeGM18,AM21,F05,F06,F07,FK09,FS18,JSSG13,JZ25,LeG09,W73}, convex functions \cite{ALeGM18,AM17,AM19APDE,AM19,DHLL18,DM25,M24}, Sobolev functions \cite{FIL14,I13,S17}, or smooth functions in the Heisenberg group \cite{PSZ24,PSZ19,Z23}. This is by no means an exhaustive list.

\smallskip

In \cite{AM17}, we obtained a convex $C^1$ extension theorem for $1$-jets defined on compact subsets $E$ of $\R^n.$ The extendability criteria for a jet $(f,G)$ consists of a necessary and sufficient couple of conditions, namely,
\begin{equation}\label{eq:condition_C_compact}
    f(x) \geq f(y) + \langle G(y), x-y \rangle \quad \text{for all } \:  x,y\in E, \tag{$C$}
\end{equation}
\begin{equation}\label{eq:condition_CW1}
 x,y \in E, \quad    f(x)=f(y) + \langle G(y), x-y \rangle \implies G(x)=G(y). \tag{$CW^1$}
\end{equation}
Then the extensions $F$ of a jet $(f,G)$ could be taken to satisfy $\lip(F) \leq c(n) \sup_{x\in E} |G(x)|$, for a dimensional constant $c(n)$ with $c(n) \to \infty$ as $n \to \infty.$ By different means, it was later shown in \cite{AM20} that the dimensional factor $c(n)$ can be replaced by $c=5.$ The same problem in the case where $E$ is an arbitrary (possibly unbounded) subset of $\R^n$ was solved in \cite{AM19APDE}, with bounds of the form $\lip(F) \leq c(n) \sup_{x\in E} |G(x)|$ in the case of Lipschitz functions. Unlike the cases of compact sets $E$ or the $C^{1,\omega}$ regularity classes, this problem was much more difficult to solve, especially because of the presence of the so-called \textit{corners at infinity} and their possible incompatibility with everywhere differentiable convex extensions. We also refer to \cite{AM19} for $C^\infty$ convex extensions of jets defined in compact convex subsets of $\R^n$. 

\smallskip

In a very recent work \cite{DM25}, we managed to construct $C^1$ and $C^{1,\omega}$ convex extensions with the best possible Lipschitz constants, even in the setting of super-reflexive Banach spaces. More precisely, if $X$ is super-reflexive, and $E \subset X$ is compact, any jet $(f,G)$ defined on $E$ admits a convex extension $(F, DF)$ of class $C^1(X)$ so that $\lip(F,X) = \sup_{y\in E} \|G(y)\|_*$, provided $(f,G)$ satisfies the necessary conditions \eqref{eq:condition_C_compact} and \eqref{eq:condition_CW1}. This result corresponds to \cite[Theorem 1.6]{DM25} and its proof is based on an application of a similar theorem for $C^{1,\omega}$ convex extensions, for appropriate moduli of continuity $\omega$; see \cite[Theorem 1.2]{DM25}.

\smallskip

We learnt from \cite{AM19APDE} that extending jets $(f,G)$ with $C^1$ convex functions from unbounded sets $E$ of $\R^n$ is a much more complicated task than the bounded case. Conditions \eqref{eq:condition_C_compact} and \eqref{eq:condition_CW1} are no longer sufficient when $E$ is an unbounded set, and we need a careful study of the possible \textit{corners at infinity} of the jets and the \textit{global shape of convex functions} in terms of directions of coercivity. 

\smallskip

Given a linear subspace $X$ of $\R^n$, we will denote by $X^\perp$ its orthogonal complement, and by $P_X : \R^n \to X$ the orthogonal projection onto $X.$ In the terminology of \cite{AM19APDE}, we say that a jet $(f,G) : E \to \R \times \R^n$ has a \textit{corner at infinity in a direction of $X^\perp$} when there are two sequences $(x_j)_j,$ $(z_j)_j$ in $E$ with $(|x_j|)_j \to \infty$, both $(G(z_j))_j$ and $(P_X(x_j))_j$ bounded, and 
$$
\lim_{j \to \infty} \left( f(x_j)-f(z_j)- \langle G(z_j), x_j-z_j \rangle \right)=0  \quad  \text{and} \quad \limsup_{j \to \infty} |G(x_j)-G(z_j)|>0.
$$
Also, according to \cite[Lemma 4.1]{A12} or \cite[Theorem 1.11]{AM19APDE}, for every convex function $F$ (not necessarily of class $C^1(\R^n)$) there is a unique linear subspace $X_F$ of $\R^n$, a convex coercive function $c: X_F \to \R$, and $v\in \R^n$ so that
\begin{equation}\label{eq:decomposition_intro}
F(x) = c(P_{X_F}(x)) + \langle v, x \rangle \quad \text{for all} \quad x\in \R^n.    
\end{equation}
Coercivity simply means that $\lim_{|x| \to \infty} c(x) = + \infty.$ It was implicitly shown in \cite{AM19APDE} that in fact 
\begin{equation}\label{eq:formula_X_F_intro}
 X_F = \mathrm{span}\lbrace \xi_x-\xi_y \, : \, \xi_x \in \partial F(x) , \, \xi_y\in \partial F(y), \, x,y\in \R^n \rbrace.   
\end{equation} 
See for example Lemma \ref{lem:properties_subdifferentials_decompositions} below for a detailed proof. Here $\partial F(x)$ denotes the subdifferential of $F$ at $x\in \R^n.$ The subspace $X_F$ determines the directions of coercivity of $F$ in the sense of \eqref{eq:decomposition_intro}. For differentiable functions, the connection with corners at infinity as follows: if $F \in C^1(\R^n)$, then the jet $(F, \nabla F)$ cannot have a corner at infinity in a direction of $(X_F)^\perp. $
Moreover, if $\nabla F=G$ on $E,$ by formula \eqref{eq:formula_X_F_intro} we have that $X_F$ must contain the subspace $Y(E,f,G):=\mathrm{span}\lbrace G(x)-G(y) \, : \, x,y\in E \rbrace.$ Therefore, the existence of a linear subspace $X$ of $\R^n$ that contains $Y$, and so that $(f,G)$ has no corners at infinity in directions of $X^\perp$ is a necessary condition for convex $C^1(\R^n)$ extensions of $(f,G).$
However, unless $X=Y(E,f,G),$ further conditions are needed to guarantee the existence of such extensions, as the shape of $(f,G)$ on $E$ might force all convex extensions of $(f,G)$ to be non-differentiable along a half-line of $\R^n.$ We remedied this issue by introducing necessary conditions that, roughly speaking, led us to an extension of the data $(E,f,G)$ with finitely many more points so that the new jet $(E^*,f^*,G^*)$ does satisfy $Y(E^*,f^*,G^*)=X.$  

\smallskip

Interestingly, those extensions can be taken with a \textit{prescribed global behavior} in terms of directions of coercivity, again, in the sense of \eqref{eq:decomposition_intro}. More precisely, the conditions characterized jets $(f,G)$ along with subspaces $X \subset \R^n$ for the existence of convex $F \in C^1(\R^n)$ with $(F,\nabla F)=(f,G)$ on $E$ and $X_F=X.$ This corresponds to \cite[Theorem 1.13]{AM19APDE} for all convex functions, and \cite[Theorem 1.14]{AM19APDE} for Lipschitz convex functions. These results where the extensions are built with customized directions of coercivity have already been relevant in obtaining Lusin-type theorems for convex functions; see \cite{ADH24,AH21}.

\smallskip

In this paper, we construct convex $C^1(\R^n)$ Lipschitz extensions of $1$-jets $(f,G)$ with the best possible Lipschitz constant $\lip(F) = \sup_{x\in E} |G(x)|$, which additional prescribe a given global behavior, thus obtaining the sharp form of \cite[Theorem 1.14]{AM19APDE}. The extendability conditions we will be using are the same as in that theorem, namely,




\begin{enumerate}[align=left]
\item[{\crtcrossreflabel{$(\mathrm{C_b})$}[condition:convexity]}] $G$ is continuous and bounded and $f(x)\geq f(y)+\langle G(y), x-y\rangle$ for all $x, y\in E$.

\medskip

\item[{\crtcrossreflabel{$(\mathrm{S}_{Y,X})$}[condition:subspacecontained]}] $Y:=\mathrm{span} \{G(x)-G(y) : x, y\in E\} \subseteq X.$   

\medskip

\item[{\crtcrossreflabel{$(\mathrm{C}_{Y,X})$}[condition:existencecones]}] If $Y$ is as in \ref{condition:subspacecontained} and $Y\neq X$, and we denote $k= \dim Y$ and $d= \dim X$, there are points $p_1, \ldots, p_{d-k} \in \R^n \setminus \overline{E}$, a number $\varepsilon \in (0,1),$ and linearly independent normalized vectors $w_1, \ldots, w_{d-k} \in X \cap Y^{\perp}$ such that, for every $j=1, \ldots , d-k,$ the cone
$$
 V_j := \lbrace x\in \R^n \: : \:  \varepsilon \langle w_j,x- p_j \rangle \geq |P_Y(x-p_j)|\rbrace
$$ 
satisfies $V_j\cap \overline{E}= \emptyset.$ 

\medskip

\item[{\crtcrossreflabel{$(\mathrm{CW}^1_X)$}[condition:cornersorthogonal]}] If $(x_j)_j, (z_j)_j$ are sequences in $E$ such that $(P_X(x_j))_j$ is bounded, then
$$
\lim_{j\to\infty} \left( f(x_j)-f(z_j)- \langle G(z_j), x_j-z_j \rangle \right) = 0 \implies \lim_{j\to\infty} | G(x_j)-G(z_j) | = 0.
$$
\end{enumerate}

Condition \ref{condition:cornersorthogonal} says that the jet $(f,G)$ cannot have corners at infinity in directions orthogonal to $X$. The most technical condition \ref{condition:existencecones} essentially says that if  $Y \neq X,$ then there should be enough room in $\R^n \setminus \overline{E}$ to insert $d-k$ cones \textit{orthogonal to} $Y$.

\medskip

The main result of this paper is as follows.

\begin{theorem}\label{thm:maintheorem}
Let $E \subset \Rn$ be a subset, $X\subset\R^n$ a linear subspace, and $f:E \to \R, \: G: E \to \Rn$ a $1$-jet with $G$ non-constant. Assume that $f,G,X$ satisfy conditions \ref{condition:convexity}, \ref{condition:subspacecontained}, \ref{condition:existencecones}, \ref{condition:cornersorthogonal} in the set $E$. Then there exists a Lipschitz convex function $F:\R^n\to\R$ of class $C^1(\R^n)$ such that $(F, \nabla F)=(f,G)$ on $E$, $ X_F=X,$ and
\begin{equation}\label{eq:sharp_constant_theorem}
 \lip(F) = \sup_{x\in \R^n} | \nabla F(x)| = \sup_{y\in E} |G(y)|.   
\end{equation}

\end{theorem}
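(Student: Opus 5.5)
The plan is to follow the architecture of the proof of \cite[Theorem 1.14]{AM19APDE}, changing each step so that no Lipschitz constant is lost. Set $M:=\sup_{y\in E}|G(y)|$. The lower bound $\lip(F)\geq M$ is automatic once $\nabla F=G$ on $E$, so the whole task is the upper bound $|\nabla F|\leq M$ everywhere.

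\textbf{Step 1: reduce to the case $Y=X$.} When $Y\neq X$, use condition \ref{condition:existencecones} to add the finitely many points $p_1,\dots,p_{d-k}$ to $E$. At $p_j$ we prescribe the gradient $G^*(p_j)=\eta+t\,w_j$, where $\eta$ is a suitable vector in the closed convex hull of $G(E)$ and $t>0$ is small, and we choose the value $f^*(p_j)$ to be large. The cone $V_j$ does not meet $\overline E$, and this is what lets \ref{condition:convexity} and \ref{condition:cornersorthogonal} survive for the enlarged jet $(E^*,f^*,G^*)$. By construction $Y(E^*,f^*,G^*)=X$.

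The delicate point is that $|\eta+t w_j|$ might exceed $M$. Two ways around this:
\begin{enumerate}[align=left]
\item[(a)] Pick $\eta$ in the relative interior of $\mathrm{conv}\,G(E)$, so that $|\eta|<M$ (this uses that $G$ is non-constant), and then take $t$ small enough that $|\eta+t w_j|\leq M$ still holds.
\item[(b)] If (a) is not available, modify the direction slightly.
\end{enumerate}
With $\varepsilon$ and $t$ chosen compatibly, the cone condition forces the support inequalities $f^*(x)\geq f^*(y)+\langle G^*(y),x-y\rangle$ to hold.

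\textbf{Step 2: the sharp extension when $Y=X$.} Use the decomposition \eqref{eq:decomposition_intro} and work on $X$, since condition \ref{condition:cornersorthogonal} lets us essentially reduce to a jet on $P_X(E)$ plus a linear part. Consider the minimal convex extension
$$m(x)=\sup_{y\in E}\{f(y)+\langle G(y),x-y\rangle\},$$
which is convex and satisfies $\lip(m)\leq M$ exactly. It may fail to be $C^1$, because of corners both at finite distance and at infinity. The sharp-constant smoothing of \cite{DM25} handles corners at finite distance: there one uses $C^{1,\omega}$ extensions with $\lip$ preserved, via an envelope of the form $\mathrm{conv}(\min\{\dots\})$, or by adding to $m$ a small convex $C^1$ correction that vanishes to first order on $E$. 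I would attempt a construction of the form
$$F=\mathrm{conv}\Big(\inf_{y\in \overline E}\{f(y)+\langle G(y),\cdot-y\rangle+\varphi_y(\cdot-y)\}\Big),$$
with $\varphi_y$ a small $C^1$ convex function. Each function inside the infimum should have gradient of norm at most $M$; the convex envelope of such a family stays $M$-Lipschitz, because taking convex envelopes does not increase the Lipschitz constant. The corrections $\varphi_y$ need to have gradients in a set that keeps $|G(y)+\nabla\varphi_y|\leq M$; for this one chooses $\varphi_y$ so that its gradients point toward $\mathrm{conv}\,G(E)$, or one clips them by composing with the radial structure of the ball of radius $M$. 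The envelope must then be shown to be differentiable, and on unbounded sets this is exactly where \ref{condition:cornersorthogonal} and $Y=X$ come in. Coercivity of $F$ on $X$ together with the linear part yields $X_F=X$, via \eqref{eq:formula_X_F_intro}: $X_F\supseteq Y=X$ because $\nabla F=G$ on $E$, and $X_F\subseteq X$ because all gradients lie in $\eta+X$.

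\textbf{Main obstacle.} The hardest step is making the $C^1$ smoothing compatible with the exact bound $|\nabla F|\leq M$ when $E$ is unbounded. The dimensional constants in \cite{AM19APDE} came precisely from the smoothing near corners at infinity. My first attempt would be to apply \cite[Theorem 1.6]{DM25} on an exhaustion by compact pieces $E\cap \overline B(0,R)$, each enlarged with points controlling behavior at infinity. I would then pass to a limit using local uniform convergence of convex functions with uniformly bounded gradients, and use \ref{condition:cornersorthogonal} to guarantee that the limit remains differentiable.
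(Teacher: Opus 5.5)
There is a genuine gap, and it is in Step 2, the step that has to produce the sharp constant.

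\textbf{Step 2.} The route you would try first does not work.
\begin{itemize}
\item A locally uniform limit of convex $C^1$ functions with $|\nabla F_R|\le M$ need not be differentiable; for instance $\sqrt{t^2+1/k}\to|t|$.
\item \cite[Theorem 1.6]{DM25} gives no modulus of continuity for $\nabla F_R$ that is uniform in $R$.
\item The extension built from $E\cap\overline{B}(0,R)$ is unconstrained on $E\setminus\overline{B}(0,R)$, and nothing controls the subspace $X_F$ of the limit.
\end{itemize}
Condition $(\mathrm{CW}^1_X)$ only restricts the jet on $E$. It says nothing about how the approximants bend between data points, so it cannot rescue differentiability of the limit; even at $y\in E$ you only get $G(y)\in\partial F(y)$.

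The convex-envelope alternative is left open. When $|G(y)|=M$, convexity gives $\langle\nabla\varphi_y(z),z\rangle\ge\varphi_y(z)$, so $|G(y)+\nabla\varphi_y|\le M$ forces $\varphi_y\equiv 0$ on the ray $\R_+G(y)$. You give no argument that the resulting envelope is differentiable on an unbounded set, and that is the whole difficulty.

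The missing idea is that you never have to smooth with the sharp constant. After enlarging the jet, the paper writes $m^*=c^*\circ P_X+\langle v^*,\cdot\rangle$ with $c^*$ coercive and $|v^*|<L$.
\begin{itemize}
\item $(\mathrm{CW}^1_X)$ for $E^*$ is used exactly to make $c^*$ differentiable on $\overline{P_X(E^*)}$.
\item \cite[Lemma 6.5]{AM19APDE} then gives a convex coercive $H\in C^1(X)$ matching the $1$-jet of $c^*$ there, with no control on $\lip(H)$. So $\widetilde F=H\circ P_X+\langle v^*,\cdot\rangle$ is a non-sharp $C^1$ extension.
\item The paper takes its Lipschitz envelope $F(x)=\inf_y\{\widetilde F(y)+L|x-y|\}$. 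Since $|G^*|\le L$, $m^*$ is $L$-Lipschitz, hence $m^*\le F\le\widetilde F$. This yields $(F,\nabla F)=(f^*,G^*)$ on $E^*$ and $X_F=X$.
\item Differentiability everywhere comes from $|v^*|<L$ and the coercivity of $H$. Near-minimizers $y_k$ for $F(x_0)$ have $P_X(y_k)$ bounded, so the symmetric second difference of $F$ at $x_0$ is dominated by that of $H$ over a compact set. That is $o(|h|)$ by continuity of $\nabla H$.
\end{itemize}

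\textbf{Step 1.} Your choice (a) is sound, and is in fact a clean substitute for Theorem \ref{thm:decomposition_controlleddirection}. Any $\eta\in\mathrm{ri}\,\mathrm{conv}\,G(E)$ has $|\eta|<M$ by strict convexity of the ball. Since $\eta+\delta B_Y\subset\mathrm{conv}\,G(E)$ for some $\delta>0$ ($B_Y$ the unit ball of $Y$), $m-\langle\eta,\cdot\rangle$ is invariant under $Y^\perp$-translations and coercive on $Y$. That is all the paper uses of its vector $v$, and the same remark applied to $G^*(E^*)$ produces $v^*$.

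However, the new points are in the wrong place. At the apex $p_j$ the two families of support inequalities can be incompatible for every value of $f^*(p_j)$. Take $E=\{(\pm1,0)\}\subset\R^2$, $f\equiv1$, $G(\pm1,0)=(\pm1,0)$, $X=\R^2$, $p=(3,0)$, $w=e_2$; all hypotheses hold.
\begin{itemize}
\item The old tangent plane at $(1,0)$ forces $f^*(p)\ge f(1,0)+\langle G(1,0),p-(1,0)\rangle=3$.
\item The new tangent plane at $p$ requires $1=f(1,0)\ge f^*(p)+\langle\eta+te_2,(1,0)-p\rangle=f^*(p)-2\eta_1$, so $f^*(p)\le 1+2\eta_1<3$.
\end{itemize}
Taking $f^*(p_j)$ large only makes the second inequality worse. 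The paper instead puts $q_j=p_j+Tw_j$ deep inside $V_j$, with $f^*(q_j)=m(q_j)+1$ and $G^*(q_j)=v+s\,w_j$ for a small $s>0$. The gain $sT$, together with $E\cap V_j=\emptyset$, gives the support inequalities with margin $1$. That margin is also what transfers $(\mathrm{CW}^1_X)$ to $E^*$.
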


We already know from \cite{AM19APDE} that \ref{condition:convexity}, \ref{condition:subspacecontained}, \ref{condition:existencecones}, \ref{condition:cornersorthogonal} are necessary and sufficient conditions for the existence of a convex, Lipschitz and $C^1$ function $F: \R^n \to \R$ with $(F, \nabla F) =(f,G)$ on $E$ and $X_F=X.$ However, in order to construct an extension with the sharp Lipschitz condition \eqref{eq:sharp_constant_theorem}, we will need to: 1) establish a decomposition theorem as in \eqref{eq:decomposition_intro} for Lipschitz convex functions with a good estimate for $|v|$; 2) modify the construction of the (non-sharp) extension in \cite[Theorem 1.14]{AM19APDE} at several points; 3) apply certain Lipschitz envelope to that non-sharp extension. See the outline of the proof of Theorem \ref{thm:maintheorem} below.

\smallskip

Let us make some remarks concerning the identity \eqref{eq:sharp_constant_theorem} in Theorem \ref{thm:maintheorem}. 

\begin{remark}
    {\em Let $E \subset \R^n$ and $(f,G) : E \to \R \times \R^n$ a $1$-jet. 

\begin{enumerate}
    \item For non-compact sets $E,$ the best estimate for the Lipschitz constant $\lip(F)$ of a convex $C^1(\R^n)$ extension of $(f,G)$ before Theorem \ref{thm:maintheorem} were of the form $\lip(F) \leq c(n) \sup_{y\in E} |G(y)|$, with $c(n)$ growing to infinity with the dimension; cf. \cite[Theorem 1.14]{AM19APDE}. And for compact sets $E,$ the optimal bound has been recently obtained in \cite[Theorem 1.6]{DM25}, in the setting of super-reflexive Banach spaces.
    \item The trivial case where $G$ is constant is excluded from Theorem \ref{thm:maintheorem}, because if $G(y) = u \in \R^n$ for all $y\in E,$ condition \ref{condition:convexity} easily yields
    $
     f(x) - f(y) = \langle u,x -y \rangle
    $
    for all $x,y\in E.$ Therefore, $(f, G)$ already admits a trivial affine extension satisfying \eqref{eq:sharp_constant_theorem}, namely, the function $F(x) = f(y_0)+ \langle u, x-y_0 \rangle$ with any $y_0 \in E.$ However, in this trivial case it is not always possible to find extensions $F$ satisfying \eqref{eq:sharp_constant_theorem} and at the same time prescribing a global behavior $X \neq \lbrace 0 \rbrace$; e.g. in the case where $G$ is identically zero on $E.$
    \item Condition \ref{condition:convexity} ensures that $\sup_{y\in E} |G(y)|$ is the right constant to gauge the Lipschitz properties of the jet $(f,G).$ Indeed, it is easy to see that
    $$
    |f(x)-f(z)| \leq \max\lbrace \langle G(x), x-z \rangle , \langle G(z), z-x \rangle \rbrace \leq \big ( \sup_{y\in E} |G(y)| \big ) |x-z|, \quad x,z\in E,
    $$
    whence $\lip(f,E) \leq \sup_{y\in E} |G(y)| .$
    \item For general (not necessarily convex) $C^1$ functions, the known sharpest form of the Whitney-type extensions is that from \cite{JSSG13}, which provide, for every $\varepsilon>0,$ an extension $F_\varepsilon \in C^1(\R^n)$ of $(f,G)$ with
    $$
    \lip(F_\varepsilon,\R^n) \leq \varepsilon + \max \left\lbrace \lip(f,E), \sup_{y\in E} |G(y)| \right \rbrace .
    $$
    This result holds even if one replaces $\R^n$ with a Hilbert space, under appropriate necessary and sufficient conditions. Remarkably, the identity \eqref{eq:sharp_constant_theorem} in Theorem \ref{thm:maintheorem} shows that that $\varepsilon>0$ can be \textit{done away with} for the analogous convex extension problem. If we disregarded prescription of global behavior and were interested in extensions that are optimal \textit{only} up to some additive $\varepsilon>0,$ the proof of Theorem \ref{thm:maintheorem} would be considerably shorter; see Remark \ref{rem:1+epsilon}.   
\end{enumerate}
    
    }
\end{remark}

\subsection*{Outline of the proof of Theorem \ref{thm:maintheorem}} We will first show in Section \ref{sect:behavior} that non-affine $L$-Lipschitz convex functions can be written as $f= c\circ P_X  + \langle v, \cdot \rangle,$ with $c:X \to \R$ convex and coercive and $|v|<L;$ see Theorem \ref{thm:decomposition_controlleddirection}. This is achieved by first proving the result for maxima of affine functions, where we need to use the strict convexity of the Euclidean norm in $\R^n \setminus \lbrace 0 \rbrace.$ The proof of Theorem \ref{thm:maintheorem} is given in Section \ref{sect:proofmain}, and split into several subsections. For a subspace $X \subset \R^n$ and a jet $(f,G)$ with $|G|$ bounded by $L$ on $E$, we first define a \textit{minimal convex extension} (not necessarily everywhere differentiable) from the jet $(f,G)$ by formula 
$$
m(x) = \sup\lbrace f(y)+ \langle G(y),x-y \rangle \, : \, y \in E \rbrace, \quad x\in \R^n,
$$
and decompose it according to Theorem \ref{thm:decomposition_controlleddirection}. That is, $m = c \circ P_Y + \langle v, \cdot \rangle;$ where $Y$ is that of condition \ref{condition:subspacecontained} and $|v|<L$. 
Then, using this strict inequality and condition \ref{condition:existencecones}, we extend $(f,G)$ from $E$ to $E^* = E \cup \lbrace q_1, \ldots, q_{d-k} \rbrace$, where $q_j \in V_j,$ and so that the expanded jet $(f^*,G^*)$ satisfies both $X= \mathrm{span}\lbrace G^*(x)-G^*(y) \, : \, x,y\in E^* \rbrace$ and condition \ref{condition:cornersorthogonal}, \textbf{and still $|G^*| \leq L$ on $E^*.$}
This will imply that the updated minimal convex extension
$$
m^*(x) = \sup\lbrace f^*(y)+ \langle G^*(y),x-y \rangle \, : \, y \in E^* \rbrace, \quad x\in \R^n,
$$
is $L$-Lipschitz, and so can be written as $m^* = c \circ P_X + \langle v^*, \cdot \rangle,$ with $|v^*| < L,$ and $c^* : X \to \R$ convex, coercive and differentiable in $\overline{P_X(E^*)}.$ Then we will find a convex $C^1(X)$ and coercive function $H : X \to \R$ with $(H, \nabla H ) = (c,\nabla c)$ on $P_X(E^*).$ Defining 
$$
\widetilde{F}(x) = H(P_X(x)) + \langle v^*, x \rangle, \quad x\in \R^n,
$$
we obtain a convex $C^1(\R^n)$ function with $(F,\nabla F) = (f^*,G^*) $ on $E^*$ and $|v^*| \leq L$. 
The desired function in Theorem \ref{thm:maintheorem} will be
$$
F(x)= \inf\lbrace \widetilde{F}(y) + L |x-y| \, : \, y \in \R^n \rbrace, \quad x\in \R^n.
$$
In order to show that $X_F=X$, it is crucial that $m^*$ is precisely $L$-Lipschitz, which in turn implies $m^* \leq F$. We then verify the rest of the properties of $F$, the most difficult one being the proof of the differentiability of $F$ at every $x\in \R^n.$
The first key idea here is that formula $\widetilde{F} = H \circ P_X + \langle v^*, \cdot \rangle$ and the fact that $H$ is coercive and $|v^*| \leq L$ allows to approximate the infimum $F(x)$ with sequences $(y_k)_k$ so that $(P_X(y_k))_k$ is bounded.
On the other hand, $X_{\widetilde{F}}=X$ and so $\widetilde{F}$ cannot have corners at infinity in directions orthogonal to $X.$ These two facts are (implicitly) combined to appropriately estimate certain finite differences that appear in the proof of the differentiability.

\section{Global behavior of Lipschitz convex functions}\label{sect:behavior}

In this section we show that non-affine, convex and Lipschitz functions $f$ can be written as $f = c \circ P_X + \langle v, \cdot \rangle$, where $c: X \to \R$ is convex and coercive, and $|v|< \lip(f).$ This result can be seen as a Lipschitz version of \cite[Theorem 1.11]{AM19APDE} or \cite[Lemma 4.2]{A12}.

\medskip

We first show that such a decomposition exists for convex functions that are a finite maximum of affine functions, also called \textit{corner functions} in the terminology of \cite{AM19APDE}.

\begin{lemma}\label{lem:maxfunction_esscoercive}
Let $1 \leq  k \leq n$, vectors $v_1, \ldots, v_{k+1} \in \R^n$ and $a_1,\ldots,a_{k+1} \in \R$ such that $ \lbrace v_j-v_1 \rbrace_{j=2}^{k+1}$ are linearly independent in $\R^n.$ Then the function
\begin{equation}\label{eq:def_maxfunction_C}
C(x) = \max \lbrace \langle v_j,x \rangle +a_j \: : \: j=1, \ldots, k+1 \rbrace, \quad x\in \Rn,    
\end{equation}
can be written in the form
$$
C(x) = c(P_X(x)) + \langle v, x \rangle, \quad x\in \R^n,
$$
where
\begin{equation} \label{eq:formulas_subspace_vector_maxfunctions}
  X= \mathrm{span}\lbrace v_j-v_1 \: : \: j=2,\ldots,k+1 \rbrace  \quad \text{and} \quad v = \frac{1}{k+1} \sum_{j=1}^{k+1} v_j,  
\end{equation}
and $c : X \to \R$ is convex and coercive in $X$. Moreover, if $L:= \max \lbrace |v_j| \, : \, j=1, \ldots, k+1 \rbrace$, one has
$$
\lip(c,X) \leq L + |v| \quad \text{and} \quad |v|<L.
$$
\end{lemma}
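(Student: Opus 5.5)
Writing $w_j := v_j - v$, we have $\sum_j w_j = 0$ and $w_j - w_1 = v_j - v_1$, so every $w_j$ lies in $X$. Set
$$
C(x) - \langle v, x\rangle = \max_j \{\langle w_j, x\rangle + a_j\}.
$$
Since each $w_j \in X$, we get $\langle w_j, x\rangle = \langle w_j, P_X(x)\rangle$. Hence $C(x) = c(P_X(x)) + \langle v, x\rangle$ with
$$
c(y) := \max_j \{\langle w_j, y\rangle + a_j\}, \qquad y \in X.
$$
This $c$ is convex as a maximum of affine functions. It is Lipschitz with $\lip(c,X) \le \max_j |w_j| \le \max_j |v_j| + |v| \le L + |v|$.

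For coercivity I would show that $\max_j \langle w_j, u\rangle > 0$ for every unit vector $u \in X$. Suppose instead that $\langle w_j, u\rangle \le 0$ for all $j$. Since $\sum_j \langle w_j, u\rangle = 0$, every $\langle w_j, u\rangle$ must vanish. Then $u$ is orthogonal to all $w_j - w_1 = v_j - v_1$, which span $X$, so $u \in X^\perp \cap X = \{0\}$, a contradiction. The function $u \mapsto \max_j \langle w_j, u\rangle$ is continuous on the compact unit sphere of $X$, so it attains a minimum $\delta > 0$. Consequently $c(y) \ge \delta |y| - \max_j |a_j|$, which tends to $+\infty$ as $|y| \to \infty$.

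It remains to prove the strict inequality $|v| < L$. The triangle inequality gives $|v| \le \frac{1}{k+1} \sum_j |v_j| \le L$. Equality throughout would force two things. First, $|v_j| = L$ for all $j$. Second, we would need equality in the triangle inequality for $\sum_j v_j$. By strict convexity of the Euclidean norm, this second condition means all nonzero $v_j$ are positive multiples of a single vector.

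If $L = 0$, then all $v_j = 0$, contradicting linear independence since $k \ge 1$. If $L > 0$, the two conditions together give $v_1 = \cdots = v_{k+1}$, so $v_2 - v_1 = 0$, again contradicting linear independence. Hence $|v| < L$.

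The only delicate point is this strict-inequality step, where strict convexity is used. The coercivity argument also requires care to exploit $\sum_j w_j = 0$ together with the spanning property. Everything else is routine.
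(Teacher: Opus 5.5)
Your proof is correct. The function you obtain on $X$, $c(y)=\max_j\{\langle v_j-v,y\rangle+a_j\}$, is the same one the paper arrives at. The differences lie in how you prove coercivity and $|v|<L$.

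\emph{Coercivity.} The paper uses the equivalent norm $\|y\|=\max_{2\le l\le k+1}|\langle v_1-v_l,y\rangle|$ on $X$. It splits into cases according to which affine piece attains the maximum, and gets the explicit bound $c(y)\ge \frac{1}{k(k+1)}\|y\|$ once the constants $a_j$ are set to zero. You instead combine $\sum_j w_j=0$ with the fact that the $w_j$ span $X$, which shows that no unit direction $u\in X$ has all slopes $\langle w_j,u\rangle\le 0$. Compactness of the unit sphere of $X$ then gives the linear lower bound $\delta|y|$. Your route is shorter and isolates the structural reason for coercivity. The paper's route yields an explicit constant, but that constant is never used later, since the linear growth of $c$ needed afterwards is rederived abstractly from coercivity.

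\emph{The bound $|v|<L$.} The paper treats separately the case where some $v_i=0$. When all $v_j\neq 0$, it invokes a strict inequality $\bigl|\sum_j\lambda_jx_j\bigr|<\sum_j\lambda_j|x_j|$ for nonzero vectors with linearly independent differences. As literally stated, that auxiliary inequality fails for positively collinear vectors of different lengths: take $x_1=e$, $x_2=2e$, $\lambda_1=\lambda_2=\tfrac12$. So the paper's step tacitly needs the observation you make explicitly. Equality in $|v|\le\frac{1}{k+1}\sum_j|v_j|\le L$ forces $|v_j|=L$ for every $j$, in addition to equality in the triangle inequality. Together these force $v_1=\cdots=v_{k+1}$. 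Your equality-case argument is therefore the rigorous version of this step, and it also covers the paper's separate case $v_i=0$.
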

\begin{proof}
Let $C: \R^n \to \R$, $X \subset \R^n$ and $v\in \R^n$ as in \eqref{eq:def_maxfunction_C} and \eqref{eq:formulas_subspace_vector_maxfunctions}. Define the vectors $u_j:=v_1-v_j$ for $j=2,\ldots, k+1,$ and note that
$$
v = \frac{1}{k+1} \sum_{i=1}^{k+1} v_i = \frac{1}{k+1} \sum_{i=1}^{k+1}(v_1-u_i) = v_1-\frac{1}{k+1} \sum_{i=1}^{k+1} u_i.
$$
Using this identity, we can rewrite $C(x),$ $x\in \R^n,$ as follows
\begin{align*}
C(x)-\langle v,x \rangle & = \max \left\lbrace \langle v_j-v_1,x \rangle + b_j -\frac{1}{k+1} \sum_{i=1}^{k+1} \langle u_i, x \rangle  \: : \: j=1,\ldots, k+1 \right\rbrace \\
& = \max \left\lbrace \frac{1}{k+1}  \sum_{i=2}^{k+1} \langle u_i,x \rangle + b_1,-\langle u_j,x \rangle + \frac{1}{k+1} \sum_{i=2}^{k+1} \langle u_i,x \rangle + b_j \: : \: j=2,\ldots, k+1 \right\rbrace .
\end{align*}
Because $u_j \in X,$ we have that $\langle u_j, x \rangle = \langle u_j, P_X(x) \rangle$, which allows to decompose $C$ in the form
\begin{equation}\label{eq:decomposition_maxfunction_cPX}
 C(x)= c (P_X(x))+ \langle v,x \rangle , \quad x\in \R^n;     
\end{equation}
where
\begin{equation}\label{eq:coercivepart_maxfunction}
c(y)= \max \left\lbrace   \langle \xi,y \rangle + b_1, \langle \xi-u_j,y \rangle + b_j \: : \: j=2,\ldots, k+1 \right\rbrace, \quad y \in X, \quad \xi:=\frac{1}{k+1} \sum_{i=2}^{k+1} \langle u_i,y \rangle.
\end{equation}
As a maximum of affine functions, $c:X \to \R$ is convex in $X$. We next show that $c$ is a coercive function in $X$. To do so, we may of course assume that $b_1=\cdots=b_{k+1}=0,$ so that $c$ is simply 
$$
c(y) = \max \lbrace \langle \xi, y \rangle, \langle \xi-u_j,y \rangle + b_j \: : \: j=2,\ldots, k+1 \rbrace, \quad y \in X.
$$
The family $\lbrace u_j\rbrace_{j=2}^{k+1}$ is a basis of $X,$ and so
$$
\| y \|: = \max \lbrace | \langle u_2, y \rangle |, \ldots, |\langle u_{k+1}, y \rangle | \rbrace, \quad y \in X,
$$
defines an equivalent norm in $X.$ Given $y\in X$, assume first that $\langle \xi, y \rangle \geq \langle \xi-u_j, y\rangle$ for all $j=2,\ldots, k+1.$ We have that $\langle u_j, y \rangle \geq 0$ for all $j$, and therefore
$$
c(y) = \langle \xi, y \rangle  \geq \frac{1}{k+1} \max \lbrace | \langle u_2, y \rangle |, \ldots, |\langle u_{k+1}, y \rangle | \rbrace = \frac{1}{k+1}\|y\|.
$$
And in the case where the maximum defining $c(y)$ is $\langle \xi-u_j, y \rangle $ for some $j \in \lbrace 2, \ldots , k+1 \rbrace$, then we have the inequality
$$
\langle u_j , y \rangle \leq \min \lbrace 0 , \langle u_l, y \rangle \: : \: l=2, \ldots, k+1 \rbrace.
$$
This gives the estimates
\begin{align*}
    c(y)  = \langle \xi-u_j, y \rangle & = - \langle u_j, y \rangle + \frac{1}{k+1} \sum_{ \lbrace l \: : \: \langle u_l,y \rangle \leq 0 \rbrace }\langle u_l, y \rangle+ \frac{1}{k+1} \sum_{ \lbrace l \: : \: \langle u_l,y \rangle > 0 \rbrace }\langle u_l, y \rangle    \\
    & \geq -\frac{1}{k} \sum_{ \lbrace l \: : \: \langle u_l,y \rangle \leq 0 \rbrace }\langle u_l, y \rangle + \frac{1}{k+1} \sum_{ \lbrace l \: : \: \langle u_l,y \rangle \leq 0 \rbrace }\langle u_l, y \rangle+ \frac{1}{k+1} \sum_{ \lbrace l \: : \: \langle u_l,y \rangle > 0 \rbrace }\langle u_l, y \rangle \\
    & = \frac{1}{k(k+1)} \sum_{ \lbrace l \: : \: \langle u_l,y \rangle \leq 0 \rbrace }|\langle u_l, y \rangle | + \frac{1}{k+1} \sum_{ \lbrace l \: : \: \langle u_l,y \rangle > 0 \rbrace }|\langle u_l, y \rangle| \\
    & \geq \frac{1}{k(k+1)} \max \lbrace | \langle u_2, y \rangle |, \ldots, |\langle u_{k+1}, y \rangle | \rbrace = \frac{1}{k(k+1)} \|y\|.
\end{align*}
These inequalities yield the coercivity of $c$ in $X.$

\smallskip

Let us show the inequality $|v|<L$. Since the family $\lbrace v_j-v_1 \rbrace_{j=2}^{k+1}$ is linearly independent, there is at most one $v_i$ equal to $0.$ If some $v_i$ equals $0$, a simple application of the triangle inequality and the fact that $|v_j| \leq L$ for all $j$ give
$$
|v| \leq \frac{1}{k+1}\sum_{j=1}^{k+1} |v_j| =   \frac{1}{k+1}\sum_{j=1, \, j \neq i}^{k+1} |v_j|  \leq \frac{k}{k+1} L <L.
$$
And if none of the $v_j's¨$ are zero, we use the fact that $|v_j| \leq L$ for all $j$, that $v$ is a strict convex combination of $v_1,\ldots, v_{k+1}$, and that the Euclidean norm $| \cdot |$ is strictly convex in $\R^n \setminus \lbrace 0 \rbrace.$ More precisely, if $m\in \N$, $ \lambda_1, \ldots, \lambda_m \in (0,1)$ are so that $\sum_{j=1}^{m} \lambda_j=1$, and $\lbrace x_j \rbrace_{j=1}^m \subset \R^n \setminus \lbrace 0 \rbrace$ are vectors with $\lbrace x_j-x_1 \rbrace_{j=1}^m$ linearly independent, then
$$
\left | \sum_{j=1}^m \lambda_j x_j \right | < \sum_{j=1}^m \lambda_j |x_j|.
$$
This property is easily shown by first using that $\langle x_i,x_j \rangle < |x_i| |x_j|$ for $i \neq j$ and then an argument of induction. 
\\
Now that we know that $|v|<L$, we have that $|v_j-v|\leq  L + |v|$ by the triangle inequality, which implies that $\lip(c,X) \leq  L+ |v|$, as the construction of $c:X \to \R$ (see \eqref{eq:decomposition_maxfunction_cPX}), gives
$$
c(y) = \max\lbrace \langle v_j-v, y \rangle + b_j \, : \, j=1, \ldots, k+1 \rbrace, \quad y \in X.
$$
\end{proof}

We now extend Lemma \ref{lem:maxfunction_esscoercive} to arbitrary convex and Lipschitz functions in $\R^n.$ In the proof we use the strategy and many arguments from \cite[Theorem 1.11]{AM19APDE}, but with several non-trivial modifications to obtain the right estimate for the norm of the additive linear perturbation $\langle v, \cdot \rangle.$

\begin{thm}\label{thm:decomposition_controlleddirection}
Let $f: \R^n \to \R$ be a non-affine convex Lipschitz function and $L>0$ so that $\lip(f) \leq L.$ Then there exists a subspace $X \subset \R^n$, a function $c: X \to \R$ and a vector $v\in \R^n$ so that
\begin{enumerate}
    \item[\normalfont{(1)}] $f(x) = c(P_X(x)) + \langle v,x \rangle$ for all $x\in \R^n.$
    \item[\normalfont{(2)}] $c: X \to \R$ is convex and coercive.
    \item[\normalfont{(3)}] $|v|<L$.
\end{enumerate}
\end{thm}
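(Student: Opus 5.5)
Our plan is to reduce to Lemma \ref{lem:maxfunction_esscoercive} by finding, among the gradients of $f$, an affinely independent family whose affine span determines the coercivity subspace. We work with the subdifferentials. Set $X := \mathrm{span}\lbrace \xi_x - \xi_y : \xi_x\in\partial f(x),\ \xi_y \in \partial f(y),\ x,y\in\R^n\rbrace$. Since $f$ is non-affine, $X\neq\{0\}$; let $k=\dim X\geq 1$. Because $\lip(f)\le L$, every subgradient satisfies $|\xi|\le L$.

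\textbf{Step 1: the decomposition.} Fix any $\xi_0\in\partial f(x_0)$. We first show that $f(x)-\langle \xi_0,x\rangle$ depends only on $P_X(x)$. For $w\in X^\perp$ and $t\in\R$, the function $g(t)=f(x+tw)$ is convex with derivatives $\langle \xi,w\rangle$ for $\xi\in\partial f(x+tw)$. Since $\xi-\xi_0\in X$, we get $\langle \xi,w\rangle=\langle\xi_0,w\rangle$. Hence $g$ is affine with slope $\langle \xi_0,w\rangle$.

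\textbf{Step 2: choosing the vertices.} We pick points $x_1,\dots,x_{k+1}$ and gradients $v_j\in\partial f(x_j)$ with $\{v_j-v_1\}_{j=2}^{k+1}$ a basis of $X$. This is possible because the differences of subgradients span $X$. We then set $v=\frac1{k+1}\sum v_j$. By the strict convexity argument in Lemma \ref{lem:maxfunction_esscoercive}, $|v|<L$. Since $v-\xi_0\in X$, Step 1 still gives $f(x)=c(P_X x)+\langle v,x\rangle$ with $c(y):=f(y)-\langle v,y\rangle$ for $y\in X$. Clearly $c$ is convex, so (1) and (3) hold.

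\textbf{Step 3: coercivity of $c$.} Consider the corner function
$$
C(x)=\max_j\lbrace f(x_j)+\langle v_j,x-x_j\rangle\rbrace\le f(x).
$$
By Lemma \ref{lem:maxfunction_esscoercive}, $C=\tilde c\circ P_X+\langle v,\cdot\rangle$, with $\tilde c$ coercive on $X$ and with the same $X$ and $v$ (formula \eqref{eq:formulas_subspace_vector_maxfunctions}). Therefore, for $y\in X$,
$$
c(y)=f(y)-\langle v,y\rangle\ge C(y)-\langle v,y\rangle=\tilde c(y)\to\infty .
$$
This gives (2).

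\textbf{Main obstacle.} We expect the main difficulty to lie in Step 2: making sure the chosen subgradients come from differences spanning all of $X$, while keeping affine independence so that the lemma's strict inequality $|v|<L$ applies. Dimension counting handles this, since we may select a maximal linearly independent subfamily among the $v_j-v_1$ with $v_1$ fixed. We must also take care that the equality of $X$ here with the subspace in the lemma follows by construction.
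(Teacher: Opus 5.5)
Your proof is correct, but it takes a different route from the paper's.

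\textbf{The paper's route.} It first treats $f\in C^1(\mathbb{R}^n)$. It takes the \emph{largest} $k$ for which $f$ dominates a corner function whose $k+1$ slopes are affinely independent and have norm at most $L$; these slopes need not be gradients of $f$. Maximality is then used to show that every gradient $\nabla f(z)$ satisfies $\nabla f(z)-v\in X$: otherwise one could add the tangent plane at $z$ and obtain a larger corner function. This gives affinity of $f$ along $X^\perp$. For general Lipschitz convex $f$, the paper mollifies to get a smooth $g$ with $f-1\le g\le f$ and decomposes $g$. Affinity along $X^\perp$ is transferred to $f$ from $f\le g+1$, by bounding the subgradients of $t\mapsto f(x+tw)$, and coercivity comes from $f\ge g$.

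\textbf{Your route.} You define $X$ intrinsically as the span of differences of subgradients. Affinity along $X^\perp$ then follows directly from the subgradient inequality. You choose the affinely independent slopes $v_j$ among actual subgradients, so Lemma \ref{lem:maxfunction_esscoercive}, applied to $\max_j\{f(x_j)+\langle v_j,\cdot-x_j\rangle\}\le f$, gives both $|v|<L$ and coercivity at once. The fact that $v-\xi_0\in X$ lets you switch the linear part from $\xi_0$ to $v$.

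\textbf{Comparison.} Your approach avoids both the maximality argument and the smoothing step. It also builds in the identification $X=\mathrm{span}\{\xi_x-\xi_y\}$, which the paper only obtains afterwards in Lemma \ref{lem:properties_subdifferentials_decompositions}. The paper's approach lets the core argument use classical gradients, at the price of the approximation step.

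\textbf{One point to tighten.} In Step 1, you do not need any chain rule for $\partial g$. For $\xi_s\in\partial f(x+sw)$ and $\xi_t\in\partial f(x+tw)$, the subgradient inequality at each point gives
$g(t)-g(s)\ge (t-s)\langle \xi_s,w\rangle=(t-s)\langle\xi_0,w\rangle$ and $g(s)-g(t)\ge (s-t)\langle\xi_0,w\rangle$, hence equality. The same argument with $X=\{0\}$ and arbitrary $w$ justifies your claim that non-affinity forces $X\neq\{0\}$.
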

 \begin{proof}
First we assume that $f \in C^1(\R^n).$ Because $f$ is not affine, we can find points $y_1, y_2\in\R^n$ with $\nabla f(y_1)\neq \nabla f(y_2)$. By the convexity of $f$, we see that
$$
f(x) \geq C(x):=\max\{f(y_1)+\langle \nabla f(y_1), x-y_1 \rangle,
f(y_2)+\langle \nabla f(y_2), x-y_2 \rangle\}, \quad \text{for all } \, x\in \R^n.
$$
We also know that $|\nabla f (y_1)|, |\nabla f(y_2)| \leq L,$ as $f$ is $L$-Lipschitz. Let then $1 \leq k \leq n$ be the greatest integer so that there exist $v_1,\ldots,v_{k+1} \in \R^n$ with $\lbrace v_j-v_1 \rbrace_{j=2}^{k+1}$ linearly independent and $|v_j| \leq L$ for all $j=1,\ldots,k+1$, $a_1, \ldots, a_{k+1} \in \R$ such that
\begin{equation}\label{eq:induction_maxfunctions}
f(x) \geq  C(x):=  \max \lbrace \langle v_j,x \rangle +a_j \: : \: j=1, \ldots, k+1 \rbrace, \quad \text{for all } \, x\in \Rn.   
\end{equation}
Let $v_1,\ldots,v_{k+1} \in \R^n$, $a_1, \ldots a_{k+1} \in \R$, and $x_0 \in \R^n$ as above and, as in Lemma \ref{lem:maxfunction_esscoercive}, we define $u_j= v_j-v_1$ for $j=2,\ldots, k+1$, $X = \mathrm{span}\lbrace u_j \: : \: j=2,\ldots, k+1 \rbrace$ and $v = \frac{1}{k+1} \sum_{j=1}^{k+1} v_j.$ 

Our next claim is that the function $f-\langle v, \cdot \rangle$ is constant on each affine subspace $y+ X^\perp$, $y\in \R^n$. Indeed, otherwise we can find $w\in X^\perp$, $y\in \R^n$ so that the function $\R \ni t\mapsto (f-\langle v, \cdot \rangle)(y + t w) $ has non-zero derivative at some $t\in \R.$ Therefore $\langle \nabla f(y + t w) - v, w \rangle \neq 0$, and thus $\nabla f(y + t w) - v$ is linearly independent with $\lbrace u_j \}_{j=2}^{k+1}$, as this family forms a basis of $X.$ But we also notice that 
$$
v_1-v = \sum_{j=1}^{k+1} \frac{1}{k+1} (v_1-v_j) \in X,
$$
which implies that the family $\lbrace \nabla f(y + t w) - v_1, v_j-v_1 \rbrace_{j=2}^{k+1}$ consists of $k+2$ linearly independent vectors. Again, since $f$ is $L$-Lipschitz, one has $| \nabla f (y + t w) | \leq L.$ The convexity of $f$ and the properties of \eqref{eq:induction_maxfunctions}, we have that 
$$
f(x) \geq C^*(x):=\max \lbrace  f(y + t w) + \langle \nabla f(y + t w), x-y-t w \rangle, \langle v_j, x \rangle + a_j \: : \: j=2, \ldots, k+1 \rbrace \quad \text{for all } \, x\in \Rn.
$$
This contradicts the maximality of $k$ for which $f$ satisfies \eqref{eq:induction_maxfunctions}. 
\\
Consequently, $f- \langle v, \cdot \rangle$ is a constant function on each affine subspace $y+ X^\perp$, $y\in \R^n.$ Therefore, if we define the convex function $c:X \to \R$ by
$$
c(z)=f(z)-\langle v, z \rangle \quad \text{for all } \, z\in X,
$$
then for every $x\in \R^n$ we can write
$$
(f- \langle v, \cdot \rangle)(x) = (f- \langle v, \cdot \rangle)(P_X(x)) = c(P_X(x)),
$$
thus proving the first statement of the theorem. By the definition of $v$ and Lemma \ref{lem:maxfunction_esscoercive}, we also know that $|v|<L.$ To verify that $c$ is coercive in $X,$ we use Lemma \ref{lem:maxfunction_esscoercive} to write the function $C: \R^n \to \R$ from \eqref{eq:induction_maxfunctions} in the form $C(x) = \widetilde{c}(P_X(x)) + \langle v, x \rangle$, $x\in \R^n,$ where $\widetilde{c} : X \to \R$ is convex and coercive in $X.$ Because $f \geq C$ in $\R^n$, we have the inequalities
$$
c(z) = f(z)- \langle v, z \rangle \geq C(z) - \langle v,z \rangle = \widetilde{c}(z), \quad z\in X.
$$
We conclude that $c$ is coercive in $X$ too, thus proving the theorem for functions $f\in C^1(\R^n).$

\medskip

Now let $f : \R^n \to \R$ be a non-affine convex and $L$-Lipschitz function, but not necessarily of class $C^1(\R^n).$ For a family of non-negative mollifiers $\lbrace \theta_\varepsilon \rbrace_{\varepsilon>0} \subset C_0^\infty(\R^n)$ with $\int_{\R^n} \theta_\varepsilon(y) \ud y = 1$, $\theta_\varepsilon \geq 0$ and $\sop(\theta_\varepsilon) \subset B(0,\varepsilon)$, it is a standard result that the integral convolutions $ g_\varepsilon : = f * \theta_\varepsilon$, $\varepsilon >0,$ define convex and $L$-Lipschitz functions of class $C^\infty(\R^n)$ so that $g_\varepsilon \to f$ uniformly on $\R^n$ as $\varepsilon \to 0^+.$ Thus we can find $g : \R^n \to \R$ of class $C^\infty(\R^n)$, convex and $L$-Lipschitz and so that $f-1\leq g\leq f$ on $\R^n.$ We observe that $g$ cannot be affine, because otherwise the non-affine function $f$ would lie below the affine $g+1$ in all of $\R^n$, a contradiction. Therefore, we can apply the case for $C^1$ functions to $g$ in order to obtain a decomposition
\begin{equation}\label{eq:decompositionapproximating}
    g(x) = c(P_X(x)) + \langle v, x \rangle, \quad x\in \R^n;
\end{equation}
where $X \subset \R^n$ is a subspace, $c : X \to \R$ is convex and coercive and $|v|<L.$ We will first prove that 
\begin{equation}\label{eq:f_affine_orthogonal}
    f(x+w)= f(x) + \langle v, w \rangle \quad \text{for all } \, x\in \R^n, \, w\in X^\perp. 
\end{equation}
Indeed, we define the functions $h(t)=f(x+tw)$ and $\varphi(t)=g(x+tw),$ for $t\in \R$. We have that 
$$
\varphi(t)=c(P_X(x+tw))+ \langle v,x+tw \rangle = c(P_X(x)) + \langle v, x+tw \rangle, 
$$
that is, $\varphi$ is affine with $\varphi'(t)= \langle v,w \rangle$ for all $t\in \R.$ For a similar reason as above, $h$ must be affine with derivative $\langle v,w \rangle$. Let us explicitly write the argument. If $\varphi(t) = a+t\langle v,w \rangle$ with $a\in \R$, we see that for $s\in \R$ and $ \xi \in \partial h(s),$ the inequality $f \leq 1 + g$ gives
$$
a+ 1 + \langle v,w \rangle t = \varphi(t) + 1 \geq h(t) \geq h(s) + \xi(t-s),
$$
 for all $t\in \R.$ This of course implies that $\xi = \langle v,w \rangle$. We have proved that $\partial h(s) = \lbrace \langle v,w \rangle \rbrace$ for all $s\in \R,$ and hence $h$ is affine function with linear form $\langle v,w \rangle.$ This shows \eqref{eq:f_affine_orthogonal}.
\\
We next define the convex function $\widetilde{c} : X \to \R$ by $\widetilde{c}(x) = f(x) - \langle v,x \rangle$ for all $x\in X.$ Using that $f \geq g$ in $\R^n$ and \eqref{eq:decompositionapproximating}, we get the following inequalities for $x\in X:$
$$
\widetilde{c}(x) = f(x) - \langle v,x \rangle \geq g(x)- \langle v,x \rangle = c(x);
$$
where $c:X \to \R$ is coercive. Consequently, $\widetilde{c}$ is coercive in $X$ as well. Also for every $x\in \R^n$, we can use \eqref{eq:f_affine_orthogonal} to write
$$
f(x) = f(P_X(x)) + \langle v, P_{X^\perp}(x) \rangle = \widetilde{c}(P_X(x)) + \langle v, P_X(x) \rangle + \langle v, P_{X^\perp}(x) \rangle  =  \widetilde{c}(P_X(x)) + \langle v, x \rangle.
$$
We have thus shown properties $(1),(2),(3)$ for all non-affine, convex and $L$-Lipschitz functions. 

\medskip

\end{proof}

In the following lemma we collect various relevant properties of decompositions such as those of Theorem \ref{thm:decomposition_controlleddirection}, which were implicitly proven and used in \cite{AM19APDE}; see for example Lemma 4.4 there. Here we provide the detailed proof for the sake of completeness. The properties are true for not necessarily Lipschitz functions.

\begin{lemma}\label{lem:properties_subdifferentials_decompositions}
    Let $f: \R^n \to \R$ be a non-affine convex function so that $f(x) = c(P_X(x)) + \langle v,x \rangle$, $x\in \R^n$, with $X \subset \R^n$ is a subspace, $c :X \to \R $ is convex and coercive in $X,$ and $v\in \R^n$. 
    \\
    Then, for every $x\in \R^n$ and $\eta \in \partial f(x)$, we have $ \eta-v  \in \partial c ( P_X(x)) \subset X.$ In addition, there holds
\begin{equation}\label{eq:formula_Xequalspan_subdifferentials}
X= \mathrm{span} \lbrace \xi_x-\xi_y \: : \: \xi_x\in \partial f (x), \: \xi_y \in \partial f(y),\: x,y\in \R^n \rbrace.    
\end{equation}
Moreover, if $f$ is represented in the form $f(x) = \sup\lbrace f(p) + \langle u_p, x-p \rangle    \, : \, p\in A \rbrace,$ $x\in \R^n$, with $A \subset \R^n$ and $(u_p)_{p\in A} \subset  \R^n$, then 
\begin{equation}\label{eq:maxfunction_Xequalspan_linearparts}
X= \mathrm{span} \lbrace u_p-u_q \: : \: p,q \in A \rbrace. 
\end{equation}
\end{lemma}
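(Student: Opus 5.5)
The plan is to prove the three claims in order: first the subdifferential inclusion, then \eqref{eq:formula_Xequalspan_subdifferentials}, then \eqref{eq:maxfunction_Xequalspan_linearparts}.

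\textbf{Subdifferential inclusion.} Fix $x\in\R^n$, $\eta\in\partial f(x)$, and set $z=P_X(x)$. For any $w\in X^\perp$ and $t\in\R$, the decomposition gives $f(x+tw)=f(x)+t\langle v,w\rangle$. The subgradient inequality then yields $t\langle v,w\rangle\ge t\langle \eta,w\rangle$ for all $t\in\R$, hence $\langle \eta-v,w\rangle=0$, so $\eta-v\in X$. Next, for $y\in X$ we apply the subgradient inequality at the point $x+(y-z)$:
$$c(y)+\langle v,x+y-z\rangle\ge c(z)+\langle v,x\rangle+\langle\eta,y-z\rangle .$$
This gives $c(y)\ge c(z)+\langle \eta-v,y-z\rangle$, i.e. $\eta-v\in\partial c(z)$, where the subdifferential is taken in $X$.

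\textbf{Formula \eqref{eq:formula_Xequalspan_subdifferentials}.} Call the right-hand side $Z$. The inclusion $Z\subseteq X$ is immediate, since each $\xi_x-\xi_y=(\xi_x-v)-(\xi_y-v)$ lies in $X$ by the first part. For the converse, suppose $Z\subsetneq X$ and pick a unit vector $e\in X\cap Z^\perp$. Fix $\xi_0\in\partial f(0)$. Then every subgradient $\xi$ at every point satisfies $\langle \xi,e\rangle=\langle\xi_0,e\rangle=:\alpha$. Along the line $t\mapsto f(te)=c(te)+t\langle v,e\rangle$, a convex function of one variable, every subgradient equals $\alpha$. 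Hence this function is affine, and so $c(te)=c(0)+t(\alpha-\langle v,e\rangle)$ is affine in $t$. An affine function of $t\in\R$ cannot tend to $+\infty$ in both directions, which contradicts the coercivity of $c$ on $X$, because $|te|\to\infty$ as $t\to\pm\infty$. This is where coercivity is essential. If $X=\{0\}$, the statement is trivial but the hypothesis excludes it: then $f$ would be affine.

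\textbf{Formula \eqref{eq:maxfunction_Xequalspan_linearparts}.} Let $W=\mathrm{span}\{u_p-u_q\}$. Each $u_p\in\partial f(p)$, because $f\ge f(p)+\langle u_p,\cdot-p\rangle$ with equality at $p$. Hence $W\subseteq Z=X$. For the converse, suppose $W\subsetneq X$ and take a unit vector $e\in X\cap W^\perp$. Then $\langle u_p,e\rangle=\beta$ is the same for all $p\in A$, so $f(x+te)=f(x)+t\beta$ for all $x$ and $t$, directly from the sup formula. As before, $c(te)$ is then affine in $t$, contradicting coercivity.

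\textbf{Expected obstacles.} The main delicate point is the converse inclusions. There one must transfer information about subgradients or linear parts into affineness along a line inside $X$, and this is precisely where coercivity enters. A side issue is that the sup could be $+\infty$, but this is excluded because $f$ is real-valued.
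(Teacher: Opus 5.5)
Your proposal is correct and follows essentially the same route as the paper. The subgradient inequality along directions of $X^\perp$ gives $\eta-v\in\partial c(P_X(x))\subset X$, and both reverse inclusions come from showing that $f$, hence $c$, is affine along a line in a direction $e\in X$ orthogonal to the relevant span, which contradicts coercivity. The one step worth spelling out is your passage from subgradients to affineness in the second part: it needs only that $\alpha\in\partial\varphi(t)$ for every $t$ (with $\varphi(t)=f(te)$), because the subgradient inequalities at $0$ and at $t$ then force $\varphi(t)=\varphi(0)+\alpha t$, exactly as in the paper.
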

\begin{proof}
Given $x\in \R^n$, and $\eta \in \partial f(x)$, assume for the sake of contradiction that $\eta-v \notin X.$ Then we can find $w \in X^\perp$ with $\langle \eta -v , w \rangle =1.$ By the definition of $\partial f(x),$ we can write
$$
\langle \eta , w \rangle \leq f(x+w)-f(x) = c(P_X(x+w))+\langle v, x+w \rangle - c(P_X(x))-\langle v,x \rangle = \langle v,w \rangle,
$$
and therefore $\langle \eta -v , w \rangle \leq 0$, a contradiction. This shows that $\eta -v \in X.$ Now, for every $x\in \R^n$ and $z\in X$ using again that $f= c \circ P_X+ \langle v, \cdot \rangle,$ we can write
\begin{align*}
c(z)-c(P_X(x))= f(z)-\langle v,z \rangle -f(x)+\langle v,x \rangle \geq  \langle \eta -v,z-x \rangle = \langle \eta -v,z-P_X(x) \rangle.
\end{align*}
Consequently, $\eta -v \in \partial c (P_X(x)).$ 

\smallskip

To prove \eqref{eq:formula_Xequalspan_subdifferentials}, we denote by $Z$ the linear span of all the differences of any two subdifferentials of $f.$ The inclusion $Z 	\subset X$ follows from what we have just proved about the subdifferentials of $f$ and $c.$ In order to prove that $X \subseteq Z,$ assume for the sake of contradiction that there are directions $w\in X\setminus \lbrace 0 \rbrace$ with $w \in Z^\perp.$ We fix a point $x_0\in \R^n$ and take some $\xi_t \in \partial f(x_0+tw)$. In particular, we have $\xi_0\in \partial f(x_0).$ Thus $\xi_0-\xi_t\in Z$ for every $t\in \R$ and then
$$
0\leq f(x_0+tw)-f(x_0)-\langle \xi_0, t w \rangle \leq \langle \xi_t-\xi_0, t w \rangle = 0,
$$
that is, $f(x_0+tw) = f(x_0)+ \langle \xi_0, tw \rangle$ for every $t\in \R.$ Using again that $f=c \circ P_X + \langle v,\cdot \rangle$ we get
$$
 c\left( P_X(x_0)+tw \right)=f(x_0+tw) - \langle v,x_0+tw \rangle = f(x_0)+ \langle \xi_0, w \rangle- \langle v,x_0+tw \rangle, \quad t\in \R,
$$
thus showing that $c$ is not coercive along the line $\lbrace P_X(x_0) + t w \rbrace_{t \in \R},$ a contradiction. 

\smallskip

Finally, in order to prove \eqref{eq:maxfunction_Xequalspan_linearparts}, denote $Y= \mathrm{span} \lbrace u_p-u_q \: : \: p,q \in A \rbrace .$ It is immediate that $u_p \in \partial f(p)$ for each $p\in A$, and so the inclusion $Y \subset X$ is a consequence of \eqref{eq:formula_Xequalspan_subdifferentials}. Conversely, if we had $X\neq Y$, we could find $w\in (X \cap Y^{\perp}) \setminus\{0\}$. We fix a point $q\in A$ and write, for each $t\in \R,$
\begin{align*}
0 \leq  f(q+tw)  -f(q) -\langle u_q, tw\rangle &  = \sup_{p\in E}\{ f(p)+\langle u_p, q+tw - p \rangle-f(q) -\langle u_q, tw \rangle \}\\
 & = \sup_{p\in E}\{ f(p)+\langle u_p, q- p \rangle-f(q) +\langle u_p-u_q, tw \rangle \} \\
 & =  \sup_{p\in E}\{ f(p)+\langle u_p, q- p \rangle-f(q)  \rangle \} 
 \leq 0,
\end{align*} 
after using that $u_p \in \partial f (p)$ in the last inequality. Therefore, $f(q+tw) =  f(q) + \langle u_q, w \rangle t$ for every $t\in \R.$ Repeating an argument almost identical to that of the proof of \eqref{eq:formula_Xequalspan_subdifferentials}, we get that $c$ is not coercive along the the line $\lbrace P_X(p) + t w \rbrace_{t \in \R},$ a contradiction. We conclude that $X=Y$. 
\end{proof}

\section{Proof of Theorem \ref{thm:maintheorem}}\label{sect:proofmain}

Assume that $E \subset \R^n$ is arbitrary, that $(f,G) : E \to \R \times \R^n$ is a $1$-jet on $E$ with $G$ non-constant, and that $Y,X \subset \R^n$ are linear subspaces so that $f,G,Y,X$ satisfy the conditions \ref{condition:convexity}, \ref{condition:subspacecontained}, \ref{condition:existencecones}, \ref{condition:cornersorthogonal} on the set $E.$ We will also denote
\begin{equation}\label{eq:notation_L_supremumG}
    L:= \sup_{z\in E} |G(z)|.
\end{equation}
throughout this section.

\subsection{The first minimal function}\label{subsec:firstminimal}
We start by defining
\begin{equation}\label{eq:definition_m}
    m(x) = \sup_{y\in E} \lbrace f(y)+ \langle G(y), x-y \rangle \rbrace, \quad x\in \R^n.
\end{equation}
First of all notice that if we fix $y_0 \in E,$ \eqref{eq:notation_L_supremumG} and condition \ref{condition:convexity} can be used to obtain, for all $y\in E$ and $x \in \R^n$:
\begin{align*}
  f(y_0) + L |y_0-x| & \geq f(y_0) + \langle G(y), y_0-x \rangle \\
  & \geq f(y) +\langle G(y), y_0-y \rangle  +\langle G(y), y_0-x \rangle = f(y)+ \langle G(y), x-y \rangle.  
\end{align*}
Therefore the supremum \eqref{eq:definition_m} defining $m(x)$ is finite for all $x\in \R^n.$ And since each function $x\mapsto f(y)+ \langle G(y), x-y \rangle$ is convex and $L$-Lipschitz, $m$ is convex and $L$-Lipschitz as well. Moreover, if we apply condition \ref{condition:convexity}, we see that $m(x)=f(x)$ for all $x \in \R^n.$ Therefore, an alternate way to write $m$ is via the supremum
\begin{equation}\label{eq:definition_m_withminplaceoff}
    m(x) = \sup_{y\in E} \lbrace m(y)+ \langle G(y), x-y \rangle \rbrace, \quad x\in \R^n.
\end{equation}
In particular, we deduce that $G(y) \in \partial m(y)$ for all $y\in E,$ and since $G$ is non-constant, we have that $m$ is not affine in $\R^n.$ 

\smallskip

Hence we can apply Theorem \ref{thm:decomposition_controlleddirection} to find a linear subspace $Z \subset \R^n$, convex and coercive function $c : Z \to \R$ and a vector $v\in \R^n$ so that $|v|<L$ and
$$
m(x) = c \left( P_{Z}(x) \right) + \langle v, x \rangle, \quad x\in \R^n;
$$
where $P_Z: \R^n \to Z$ is the orthogonal projection onto $Z.$ Moreover, in view of \eqref{eq:definition_m_withminplaceoff}, we can apply formula \eqref{eq:maxfunction_Xequalspan_linearparts} from Lemma \ref{lem:properties_subdifferentials_decompositions} to infer  
$Z=Y= \textrm{span} \lbrace G(x)-G(y) \: : \: x,y \in E \rbrace
$ and therefore we have
\begin{equation}\label{firstdecompositionminimal}
m (x)= c( P_{Y}(x)) + \langle v, x \rangle, \quad x\in \R^n; \quad c: Y \to \R \, \text{ convex and coercive; } \: v\in \R^n, \, |v|<L.
\end{equation}
We also observe that, since $G(x) \in \partial m (x)$, $x\in E$, Lemma \ref{lem:properties_subdifferentials_decompositions} gives
\begin{equation}\label{eq:Y_span_G-v}
    Y= \mathrm{span}\lbrace G(x)- v \, : \, x\in E \rbrace.
\end{equation}
As a convex coercive function, $c:Y \to \R$ satisfies the following property:
\begin{equation}\label{eq:function_c_cone}
\text{there exist } \: 0 < \alpha \leq L + |v|, \, \beta \in \R \: \text{ so that } \: c(y) \geq a |y|  + b \: \text{ for every } \: y \in Y.
\end{equation}
The simple proof of this fact is as follows. There must exist $a > 0$ so that $c(y) \geq a |y|$ for large enough $|y|$, $y\in Y,$ as, otherwise, we could find a sequence $(y_j)_j \subset Y$ with
$$
|y_j| \geq j \quad \text{and} \quad c(y_j) \leq \frac{1}{j}|y_j| \quad \text{for every} \quad \ell \in \N,
$$
and the convexity of $c$ would lead us to
$$
c \left( \frac{j}{|y_j|} y_j \right) \leq  \frac{j}{|y_j|}c (y_j)+\left( 1- \frac{j}{|y_j|} \right) c (0)  \leq 1+\left( 1- \frac{j}{|y_j|} \right) c (0) \leq 1+ c(0),
$$
contradicting that $c$ is coercive. Thus there are $a,r>0$ with $c(y) \geq a |y|$ for all $|y| \geq r. $ It is easy to see that setting 
$
b =\min  \Big \lbrace \inf_{B(0,r)} c - \alpha r, 0 \Big \rbrace,
$ 
we have $c(y) \geq a |y| + b$ for every $y\in Y.$ The upper bound $ a \leq L + |v|$ follows from the fact that $\lip(c) \leq L + |v|$, which is a consequence of \eqref{firstdecompositionminimal}.

\subsection{New data preserving the Lipschitz constant}\label{subsection:defining_new_data} Our next objective is to extend the definition of the jet $(f,G) : E \to \R \times \R^n$ to $\dim(X)- \dim(Y)$ new points, so that the new jet $(f^*,G^*) : E^* \to \R \times \R^n$ satisfies the necessary properties for a $C^1$ convex extension, with $X=\mathrm{span}\lbrace G^*(x)-G^*(y) \, : \, x,y \in E^* \rbrace$ and so that $G^*$ does not increase the supremum of $G.$ 

\smallskip

Denote $k = \dim Y$ and $d= \dim X$, and $w_1,\ldots, w_{d-k} \in Y ^{\perp} \cap X, \: \varepsilon \in (0,1), \:p_1, \ldots, p_{d-k}$ and the cones $V_1,\ldots, V_{d-k}$ as in condition \ref{condition:existencecones}. Also, let $a,b$ the constants from \eqref{eq:function_c_cone}. Because $|v| < L$ (see \eqref{firstdecompositionminimal}), we can find 
\begin{equation}\label{eq:parameter_theta}
0<\theta <1 \: \text{ such that } \: \theta \cdot a \cdot \varepsilon < \frac{L-|v|}{2}.
\end{equation}
We next choose a parameter $T>0$ large enough so that 
\begin{equation}\label{eq:choiceT_distinct_qj}
    T > \max_{1 \leq i < j \leq d-k} \frac{|p_i-p_j|}{|w_i-w_j|},
\end{equation}
\begin{equation}\label{eq:parameter_T_cones}
  (\theta \cdot \varepsilon \cdot a ) \, T \geq 2 - b + \max_{j=1,\ldots, d-k} \lbrace       c(P_Y(p_j) ) + a |P_Y(p_j)| \rbrace   , \quad \text{and}
\end{equation}
\begin{equation}\label{eq:parameter_T_newpoints_compatible}
\frac{\varepsilon \cdot a}{2} \cdot T \min_{1 \leq i <j \leq d-k}   |w_i-w_j|^2   \geq 1 + \max_{1 \leq i,j \leq d-k} \lbrace | c(P_Y(p_j))-c(P_Y(p_i)) + \varepsilon \cdot a \cdot \langle w_j, p_i-p_j \rangle | \rbrace.
\end{equation}
We define new points and $1$-jet data there as follows
\begin{equation}\label{eq:definitionnewpoints}
q_j= p_j + T w_j, \quad f^*(q_j) = m(q_j)+1, \quad G^*(q_j)= v + (\theta \cdot \varepsilon \cdot a) \cdot w_j, \quad j=1, \ldots, d-k .
\end{equation}
Observe that condition \eqref{eq:choiceT_distinct_qj} assures that $q_i\neq q_j$ for $i\neq j.$ Also, it is clear that $q_j \in V_j,$ as $w_j \in Y^\perp.$ In particular, condition \ref{condition:existencecones} guarantees that $q_j \notin \overline{E}$, for all $j=1, \ldots, d-k.$

\medskip

As in \cite[Lemma 6.2]{AM19APDE}, we next show that the new $1$-jet data is compatible with a $C^1$ convex extension problem. This means that the old $(f,G)$ and the new data $(f^*,G^*)$ jointly satisfy the natural condition of convexity \ref{condition:convexity}. In fact, we show this condition with positive lower bounds, which will allow us to extend the validity of condition \ref{condition:cornersorthogonal} to the new jet as well.

\begin{lemma} \label{lem:newdata_compatible}
For the data \eqref{eq:definitionnewpoints}, the following hold.
\begin{enumerate}
\item[$(1)$] $f^*(q_j)-f(x)-\langle G(x), q_j-x \rangle \geq 1$ for every $x\in E,\: j=1,\ldots, d-k.$
\item[$(2)$] $f(x)-f^*(q_j)-\langle G^*(q_j), x-q_j \rangle \geq 1$ for every $x\in E,\: j=1,\ldots, d-k.$
\item[$(3)$] $f^*(q_i)-f^*(q_j)-\langle G^*(q_j), q_i-q_j \rangle \geq 1$ for every $1 \leq i \neq j\leq  d-k.$
\end{enumerate}
\end{lemma}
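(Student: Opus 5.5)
The three inequalities are checked directly from the definitions in \eqref{eq:definitionnewpoints}. The tools are the decomposition \eqref{firstdecompositionminimal} of $m$, the identity $m=f$ on $E$, the linear lower bound \eqref{eq:function_c_cone} for $c$, and the choice of $T$. One fact is used repeatedly: since $w_j\in Y^\perp$, we have $P_Y(q_j)=P_Y(p_j)$. Hence
$$
f^*(q_j)=c(P_Y(p_j))+\langle v,q_j\rangle+1 .
$$

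\emph{Item (1).} This is immediate from the definition of $m$ as a supremum. For $x\in E$ we have $m(q_j)\geq f(x)+\langle G(x),q_j-x\rangle$, so
$$
f^*(q_j)-f(x)-\langle G(x),q_j-x\rangle\geq 1 .
$$

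\emph{Item (2).} Write $f(x)=m(x)=c(P_Y(x))+\langle v,x\rangle$ and $G^*(q_j)=v+\theta\varepsilon a\,w_j$. The terms in $v$ cancel, and the quantity to bound becomes
$$
c(P_Y(x))-c(P_Y(p_j))-1+\theta\varepsilon a\,T-\theta\varepsilon a\langle w_j,x-p_j\rangle ,
$$
using $\langle w_j,x-q_j\rangle=\langle w_j,x-p_j\rangle-T$. Since $x\in E$ and $V_j\cap\overline E=\emptyset$, we have $x\notin V_j$. Therefore
$$
\varepsilon\langle w_j,x-p_j\rangle<|P_Y(x-p_j)|\leq |P_Y(x)|+|P_Y(p_j)| .
$$
Combining this with $c(P_Y(x))\geq a|P_Y(x)|+b\geq \theta a|P_Y(x)|+b$ (which holds as $\theta<1$), the $|P_Y(x)|$ terms cancel. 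The expression is then bounded below by
$$
\theta\varepsilon aT+b-1-c(P_Y(p_j))-a|P_Y(p_j)| ,
$$
which is at least $1$ by \eqref{eq:parameter_T_cones}.

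\emph{Item (3).} Here both points are new, so the $v$-parts cancel exactly. Expanding $q_i-q_j=p_i-p_j+T(w_i-w_j)$ and using that the $w$'s are unit vectors,
$$
\langle w_j,w_i-w_j\rangle=-\tfrac12|w_i-w_j|^2 .
$$
The expression therefore equals
$$
c(P_Y(p_i))-c(P_Y(p_j))-\theta\varepsilon a\langle w_j,p_i-p_j\rangle+\tfrac{\theta\varepsilon a}{2}\,T|w_i-w_j|^2 .
$$
Since the $w_j$ are linearly independent, $|w_i-w_j|>0$ for $i\neq j$, so the last term grows linearly in $T$. The first part is a fixed constant depending only on the $p$'s.

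The main (if minor) obstacle is matching this with the stated choice of $T$. Condition \eqref{eq:parameter_T_newpoints_compatible} is written with $\varepsilon a$ in place of $\theta\varepsilon a$. I will read it as the corresponding condition with $\theta\varepsilon a$ in both places. This is harmless because $\theta$ is fixed before $T$, so $T$ may simply be enlarged to satisfy it. With that reading, the displayed quantity is at least $1$.
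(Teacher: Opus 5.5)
Your proof is correct and follows the paper's argument essentially step by step. Item (1) comes from the supremum defining $m$. Item (2) uses $m=f$ on $E$, the decomposition \eqref{firstdecompositionminimal}, the cone condition $x\notin V_j$, the bound $c\geq a|\cdot|+b$ and \eqref{eq:parameter_T_cones}; your use of $\theta<1$ to absorb the $|P_Y(x)|$ term only replaces the paper's case split on the sign of $\langle w_j,x-p_j\rangle$. Item (3) rests on the identity $\langle w_j,w_i-w_j\rangle=-\frac12|w_i-w_j|^2$.

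Your reading of \eqref{eq:parameter_T_newpoints_compatible} with $\theta\varepsilon a$ in place of $\varepsilon a$ is the right one. The paper's own computation for (3) drops the factor $\theta$ from $G^*(q_j)-v$, and it cites \eqref{eq:choiceT_distinct_qj} instead of \eqref{eq:parameter_T_newpoints_compatible}. Since $T$ is chosen after $\theta$, enlarging $T$ as you propose repairs this at no cost.
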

\begin{proof}
\item[] $(1)$ Recall the definition of $m$ in \eqref{eq:definition_m} and that $f^*(q_j) = m(q_j)+1.$ This gives the inequalities
$$
f^*(q_j)-f(x)-\langle G(x), q_j-x \rangle = m(q_j)-f(x)-\langle G(x), q_j-x \rangle + 1 \geq 1,
$$
for all $x\in E, \: j=1, \ldots, d-k.$
\item[] $(2)$ We fix $x \in E$ and $j=1, \ldots, d-k.$ By formula \eqref{firstdecompositionminimal} for $m$ and $w_j \in Y^\perp$, we easily get 
$$
m(q_j) = m(p_j)+ \langle v, q_j-p_j \rangle.
$$
This identity and \eqref{firstdecompositionminimal} again give
\begin{equation}\label{eq:m(x)minusm(qj)}
    m(x)- m(q_j) = m(x) - m(p_j) - \langle v, q_j-p_j \rangle = c(P_Y(x)) - c(P_Y(p_j)) + \langle v, x-q_j \rangle.
\end{equation}
Looking at the definitions of $f^*(q_j)$ and $G^*(q_j)$ from \eqref{eq:definitionnewpoints}, recalling that $m=f$ on $E$ (as we observed in Section \ref{subsec:firstminimal}), and using \eqref{eq:m(x)minusm(qj)}, we arrive at 
\begin{align*}
f(x)-f^*(q_j)& -\langle G^*(q_j), x-q_j \rangle  = c(P_Y(x)) - c(P_Y(p_j)) + \langle v-G^*(q_j), x-q_j \rangle  -1 \\
& =  c(P_Y(x)) - c(P_Y(p_j)) - \theta \cdot \varepsilon \cdot a \langle w_j, x-q_j \rangle  -1 \\
& =  c(P_Y(x)) - c(P_Y(p_j)) - \theta \cdot \varepsilon \cdot a \langle w_j, x-p_j \rangle +(\theta \cdot \varepsilon \cdot a ) T   -1 .
\end{align*}
Using \eqref{eq:parameter_T_cones} the last term is not smaller than
$$
c(P_Y(x))    - b +1    - \theta \cdot \varepsilon \cdot a \langle w_j, x-p_j \rangle  .
$$
And using \eqref{eq:function_c_cone}, the last term is in turn larger than or equal to
$$
a | P_Y(x)|   + a |P_Y(p_j)|  +1    - \theta \cdot \varepsilon \cdot a \langle w_j, x-p_j \rangle   \geq a | P_Y(x-p_j)| + 1 -\theta \cdot \varepsilon \cdot a \langle   w_j , x-p_j \rangle  .
$$
To estimate the last term from below, we argue as follows. In the case where $\langle   w_j , x-p_j \rangle \leq 0,$ the last term is obviously not smaller than $1.$ And if $\langle   w_j , x-p_j \rangle >0$, we condition \ref{condition:existencecones} (namely, that $x\notin V_j$) to estimate
$$
a | P_Y(x-p_j)| + 1 -\theta \cdot \varepsilon \cdot a \langle   w_j , x-p_j \rangle \geq a \langle   w_j , x-p_j \rangle  + 1 -\theta \cdot \varepsilon \cdot a \langle   w_j , x-p_j \rangle \geq 1.
$$
Following the chain of inequalities, we arrive at the desired estimate. 

\item[] $(3)$ Let $1 \leq i \neq j\leq d-k$. By the definition of $f^*$ in \eqref{eq:definitionnewpoints}, the formula \eqref{firstdecompositionminimal}, and the fact $w_i,w_j \in Y^\perp$, we easily get
$$
f^*(q_i)-f^*(q_j) =  c (P_Y(p_i))-c (P_Y(p_j))+\langle v, q_i-q_j \rangle.
$$
This identity and the definitions of $q_i,q_j$ and $G^*$ in \eqref{eq:definitionnewpoints} lead us to 
\begin{align*}
f^*(q_i)-f^*(q_j)& -\langle G^*(q_j), q_i-q_j \rangle = c (P_Y(p_i))-c (P_Y(p_j))+\langle v, q_i-q_j \rangle- \langle v+ \varepsilon \cdot \alpha \cdot  w_j, q_i-q_j \rangle \\
& = c (P_Y(p_i))-c (P_Y(p_j))- \varepsilon \cdot a \langle w_j, p_i-p_j+ T(w_i-w_j) \rangle \\
& = c (P_Y(p_i))-c (P_Y(p_j))- \varepsilon \cdot a \langle w_j, p_i-p_j \rangle + \varepsilon \cdot a \cdot T \left( 1- \langle w_i, w_j \rangle \right) \\
& = c (P_Y(p_i))-c (P_Y(p_j))- \varepsilon \: a \langle w_j, p_i-p_j \rangle + \frac{\varepsilon \cdot a}{2} \cdot T |w_i-w_j|^2 .
\end{align*}
We used in the last equality that $|w_i|=|w_j|=1.$ The last term is clearly greater than or equal to $1$ by virtue of \eqref{eq:choiceT_distinct_qj}. 
\end{proof}

\subsection{Extending to finitely-many points}
With the initial data $E, f,G$ and the new ones \eqref{eq:definitionnewpoints}, we define a extended jet $E^*,f^*,G^*$:
\begin{equation}\label{eq:def_newjet}
E^*:= E \cup \lbrace q_1, \ldots, q_{d-k} \rbrace, \quad (f^*,G^*):= \begin{cases}  
 (f,G)  & \text{on } \, E \\
   \text{as in } \eqref{eq:definitionnewpoints} & \text{on } \, \lbrace q_1, \ldots, q_{d-k} \rbrace.
\end{cases}   
\end{equation}
The following lemma shows crucial properties of the new jet $(E^*,f^*,G^*).$ For our purpose, it is particularly important that $G^*$ preserves the supremum of $G.$

\begin{lemma}\label{lemm:properties_newjets}
The jet defined in \eqref{eq:def_newjet} has the following properties.
\begin{enumerate}
\item[$(1)$] $X= \mathrm{span} \lbrace G^*(x)-G^*(y) \: :\: x,y\in E^*\rbrace.$ 
\item[$(2)$] $\sup_{x\in E^*} |G^*(x)|  = \sup_{x\in E} |G(x)| = L$. 
\item[$(3)$] Condition \ref{condition:convexity} holds for $(E^*,f^*,G^*)$ in place of $(E,f,G)$: $G^*$ is continuous on $E^*$ and $f^*(x) \geq f^*(y)+\langle G^*(y),x-y \rangle$ for all $x,y \in E^*.$ 
\item[$(4)$] Condition \ref{condition:cornersorthogonal} holds for $(E^*,f^*,G^*)$ in place of $(E,f,G)$: for any two sequences $(x_j)_j, (z_j)_j \subset E^*$ such that $(P_X(x_j))_j$ is bounded, one has
$$
\lim_{j \to\infty} \left( f^*(x_j)-f^*(z_j)- \langle G^*(z_j), x_j-z_j \rangle \right) = 0 \implies \lim_{j \to\infty} | G^*(x_j)-G^*(z_j) | = 0.
$$
\end{enumerate}
\end{lemma}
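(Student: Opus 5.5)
We prove the four items in order, each time reducing to the structure \eqref{firstdecompositionminimal}, \eqref{eq:Y_span_G-v} and to Lemma \ref{lem:newdata_compatible}.

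For $(1)$, by \eqref{eq:definitionnewpoints} we have $G^*(q_j)-v=\theta\varepsilon a\, w_j$. By \eqref{eq:Y_span_G-v}, $G(x)-v\in Y$ for $x\in E$, and these vectors span $Y$. Fix $x_0\in E$. Then $G^*(q_j)-G(x_0)=\theta\varepsilon a\,w_j-(G(x_0)-v)$, so the span in question contains $Y$ (differences of elements of $G(E)$) and every $\theta\varepsilon a\, w_j$ modulo $Y$. Since $\theta\varepsilon a\neq0$ and $w_1,\dots,w_{d-k}\in X\cap Y^\perp$ are linearly independent, the span is $Y\oplus\mathrm{span}\{w_j\}$. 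This space has dimension $d$ and is contained in $X$, because $Y\subseteq X$ by \ref{condition:subspacecontained}. Hence it equals $X$.

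For $(2)$, note $|G^*(q_j)|\le |v|+\theta\varepsilon a<|v|+\frac{L-|v|}{2}<L$ by \eqref{eq:parameter_theta}. So the supremum over $E^*$ coincides with that over $E$.

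For $(3)$, the points $q_j$ are isolated in $E^*$ because $q_j\notin\overline E$ and they are distinct. Hence continuity of $G^*$ on $E^*$ follows from continuity of $G$ on $E$. The convexity inequality holds for pairs in $E\times E$ by \ref{condition:convexity}. For mixed pairs and pairs of distinct new points it follows from Lemma \ref{lem:newdata_compatible} $(1)$–$(3)$, and it is trivial when $x=y$.

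Item $(4)$ is the main step. Take sequences $(x_j)_j,(z_j)_j\subset E^*$ with $(P_X(x_j))_j$ bounded and defect tending to $0$. Suppose, for contradiction, that $|G^*(x_j)-G^*(z_j)|\not\to0$, and pass to a subsequence with $|G^*(x_j)-G^*(z_j)|\ge\delta>0$. Since there are finitely many new points, we may pass to a further subsequence on which each of $x_j$ and $z_j$ lies either always in $E$ or is constantly one fixed $q_i$. If both lie in $E$, condition \ref{condition:cornersorthogonal} for $(f,G)$ gives the contradiction. If exactly one of them is a new point, or they are two distinct new points, Lemma \ref{lem:newdata_compatible} gives defect $\ge1$, contradicting the assumption that it tends to $0$. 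If $x_j=z_j=q_i$, the gradient difference is $0$. The care needed is only in this subsequence bookkeeping: we must make sure the boundedness of $P_X(x_j)$ is inherited on the $E$–$E$ subsequence, which it clearly is.
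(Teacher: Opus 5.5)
Your proof is correct and follows the paper's route: (1) uses \eqref{eq:Y_span_G-v} and the independence of the $w_j$ in $X\cap Y^\perp$, (2) uses $|G^*(q_j)|\le \frac{L+|v|}{2}<L$, (3) uses that the $q_j$ are isolated together with Lemma \ref{lem:newdata_compatible}, and (4) uses the lower bound $1$ from that lemma to rule out mixed pairs and pairs of distinct new points. Your contradiction-plus-pigeonhole bookkeeping in (4) is slightly more careful than the paper's claim that the sequences are either stationary at some $q_i$ or eventually in $E$, since a priori they may alternate between these two behaviours, and your subsequence argument handles that cleanly.
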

\begin{proof}
\item[] $(1)$ This is an easy consequence of \eqref{eq:Y_span_G-v}, the definitions of $(G^*(q_j))_{j=1}^{d-k}$ in \eqref{eq:definitionnewpoints} and the fact that the vectors $w_1, \ldots, w_{d-k} $ are linearly independent and contained in $Y^\perp.$

\medskip

\item[] $(2)$ Recall that $L$ is the supremum of $G$ on $E.$ Also, $G^*(q_j)=  v + (\theta \cdot \varepsilon \cdot a) \cdot w_j$ (see \eqref{eq:definitionnewpoints}); where $\theta$ is as in \eqref{eq:parameter_theta}. Therefore $|G^*(q_j)| \leq \frac{L+ |v|}{2}$ for all $j=1,\ldots,d-k.$ But also $|v|<L,$ as per \eqref{firstdecompositionminimal}. We conclude that $|G^*(x)| \leq L$ for all $x\in E^*.$

\medskip

\item[] $(3)$ As we observe right after \eqref{eq:definitionnewpoints}, the points $q_1, \ldots, q_{d-k}$ are distinct and outside $\overline{E}.$ Therefore $G^*$ is continuous in $E^*$, as $G$ is in $E$. Moreover, condition \ref{condition:convexity} for $(f,G)$ and Lemma \ref{lem:newdata_compatible} imply that
$$
f^*(x) \geq f^*(y)+ \langle G^*(y), x-y \rangle  \quad \text{for all} \quad x,y \in E^*.
$$

\medskip

\item[] $(4)$ 
According to Lemma \ref{lem:newdata_compatible}, the limit
$$
\lim_{j\to\infty} \left( f^*(x_j)-f^*(z_j)- \langle G^*(z_j), x_j-z_j \rangle \right)=0
$$ 
implies that either the sequences $(x_j)_j, (z_j)_j$ are stationary and equal to some $q_i,$ or else they are entirely contained in $E$ for large enough $j.$ In the former case the conclusion is trivial, and in the latter we get that 
$$
\lim_{j\to\infty} |G^*(x_j)-G^*(z_j)|= \lim_{j\to\infty} |G(x_j)-G(z_j)| = 0
$$
from condition \ref{condition:cornersorthogonal}. 
\end{proof}

\subsection{The second minimal function}\label{subsect:secondminimal}

We define 

\begin{equation}\label{eq:definition_secondminimal}
    m^*(x) = \sup _{y\in E^*} \lbrace f^*(y)+\langle G^*(y), x-y \rangle \rbrace, \quad x\in \R^n. 
\end{equation}

Bearing in mind properties $(2)$ and $(3)$ from Lemma \ref{lemm:properties_newjets}, we can use the same arguments as those for $m$ in Section \ref{subsec:firstminimal} to derive that $m^*$ is well-defined, $L$-Lipschitz and convex, that $m^*(x)=f^*(x)$ and $G^*(x) \in \partial m^*(x)$ for all $x\in E^*.$

Applying Theorem \ref{thm:decomposition_controlleddirection}, we can write $m^*$ as
\begin{equation}\label{eq:decomposition_minimal*}
m^* (x)= c^*( P_{X}(x)) + \langle v^*, x \rangle, \quad x\in \R^n; \quad c^*: X \to \R \, \text{ convex and coercive; } \: v^*\in \R^n, \, |v^*|<L.
\end{equation}
Note that we have applied Theorem \ref{thm:decomposition_controlleddirection} in combination with formula \eqref{eq:formulas_subspace_vector_maxfunctions} of Lemma \ref{lem:properties_subdifferentials_decompositions} and also Lemma \ref{lemm:properties_newjets}(1). 

\smallskip

Finally, property $(4)$ of Lemma \ref{lemm:properties_newjets} permits to obtain the following important lemma concerning the differentiability of $c^*$, whose proof is identical to that of \cite[Lemma 4.6]{AM19APDE}.

\begin{lemma} \label{lem:differentiabiltyclosurec^*lipschitz}
The function $c^*$ is differentiable on $\overline{P_X(E^*)}$, and $ \nabla c^* (P_X(x))= G^*(x)-v^*$ for all $x\in E^*$.
\end{lemma}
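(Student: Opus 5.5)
The plan is to prove differentiability of $c^*$ at points of $\overline{P_X(E^*)}$ by showing that the subdifferential $\partial c^*$ is a singleton there. A convex function on the finite-dimensional space $X$ is differentiable at a point exactly when its subdifferential at that point is a single vector.

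First consider points $x\in E^*$. By \eqref{eq:decomposition_minimal*} and Lemma \ref{lem:properties_subdifferentials_decompositions}, since $G^*(x)\in\partial m^*(x)$, we get $G^*(x)-v^*\in\partial c^*(P_X(x))$. It remains to show that every $\zeta\in\partial c^*(P_X(x))$ equals $G^*(x)-v^*$.

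Now take a general point $y_0\in\overline{P_X(E^*)}$ and a sequence $x_j\in E^*$ with $P_X(x_j)\to y_0$. Suppose $\zeta\in\partial c^*(y_0)$; the goal is to show $\zeta=\lim_j (G^*(x_j)-v^*)$. This simultaneously proves that the limit exists and that the subdifferential is a singleton.

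To produce this, I would use the formula $c^*(y)=\sup_{z\in E^*}\{f^*(z)+\langle G^*(z)-v^*, y-P_X(z)\rangle - \langle v^*,P_{X^\perp}(z)\rangle\}$, obtained by restricting $m^*$ to $X$. Fix a unit vector $h\in X$ and small $t>0$. Evaluate $c^*(y_0+th)$ and pick near-maximizers $z_j\in E^*$ for the point $P_X(x_j)+th$. Comparing with the supporting affine function at $z_j$, convexity forces the following: if the slopes $G^*(x_j)-v^*$ did not converge to the unique $\zeta$, one finds sequences $x_j, z_j$ with the same structure. These have $(P_X(x_j))$ bounded and
\[
f^*(x_j)-f^*(z_j)-\langle G^*(z_j),x_j-z_j\rangle\to 0,
\]
but $|G^*(x_j)-G^*(z_j)|\not\to0$.

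This contradicts Lemma \ref{lemm:properties_newjets}(4). Concretely, the argument runs by contradiction in three steps:
\begin{enumerate}
\item Assume $c^*$ is not differentiable at $y_0$, so there exist $h$ and $\delta>0$ with $c^*(y_0+th)+c^*(y_0-th)-2c^*(y_0)\ge \delta t$ for all small $t$.
\item Take $t=t_j\to0$ and near-maximizers $z_j$ for $y_0+t_jh$ (or for $y_0-t_jh$). These have bounded $G^*$ since $|G^*|\le L$, and for them the gap $m^*(x_j)-f^*(z_j)-\langle G^*(z_j),x_j-z_j\rangle$ is $O(t_j)+o(1)$. Here one uses $m^*(x_j)=f^*(x_j)$ and the fact that $c^*$ is $2L$-Lipschitz.
\item The $\delta t$ gap then yields $|\langle G^*(z_j)-G^*(x_j),h\rangle|\ge \delta/2$, contradicting the no-corners condition.
\end{enumerate}

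The main obstacle is step (2): choosing $t_j$ to shrink relative to the approximation errors so that the finite-difference gap is $o(1)$ while the slope gap stays $\ge\delta/2$. The first thing I would try is taking $t_j\to0$ slowly compared with $|P_X(x_j)-y_0|$ plus the near-maximizer error, and dividing the difference quotient by $t_j$.

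Once differentiability is established, the identity $\nabla c^*(P_X(x))=G^*(x)-v^*$ follows from the subgradient inclusion shown earlier.
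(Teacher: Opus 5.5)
Your argument is correct, and it is essentially the proof the paper imports from \cite[Lemma 4.6]{AM19APDE}: you assume a positive second difference of $c^*$ at $y_0\in\overline{P_X(E^*)}$, transfer it to $y_j=P_X(x_j)$ using that $c^*$ is Lipschitz and $|y_j-y_0|=o(t_j)$, split it via the subgradient $G^*(x_j)-v^*\in\partial c^*(y_j)$, and take near-maximizers $z_j$ with error $o(t_j)$; this gives $0\le f^*(x_j)-f^*(z_j)-\langle G^*(z_j),x_j-z_j\rangle\le 2Lt_j+o(t_j)\to 0$ while $|G^*(x_j)-G^*(z_j)|$ stays bounded below, contradicting Lemma \ref{lemm:properties_newjets}(4). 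One slip to fix: restricting $m^*$ to $X$ gives $c^*(y)=\sup_{z\in E^*}\{f^*(z)-\langle v^*,z\rangle+\langle G^*(z)-v^*,y-P_X(z)\rangle\}$, so the linear term should be $-\langle v^*,z\rangle$ rather than $-\langle v^*,P_{X^\perp}(z)\rangle$ (the two agree only when $v^*\in X^\perp$, which need not hold here), but this does not affect the argument.
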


\subsection{The first $C^1$ convex extension}\label{subsect:notsharpextension}

We are going to apply the following lemma of extension to the function $c^*.$

\begin{lemma}\label{extensionconvexcoercivelipschitz}
Let $h:X \to \R$ be a convex and coercive function such that $h$ is differentiable on a closed subset $A$ of $X.$ There exists $H \in C^1(X)$ convex and coercive such that $H=h$ and $\nabla H = \nabla h$ on $A.$
\end{lemma}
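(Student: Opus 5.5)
Since $h$ is differentiable on the closed set $A$, I will build $H$ by gluing a $C^1$ convex function that agrees to first order with $h$ on $A$ to a coercive convex piece. The intended tool is the known $C^1$ convex Whitney extension for \emph{coercive} data, \cite[Theorem 1.13]{AM19APDE}, in the special case where the prescribed subspace is the whole space $X$, so that $X_H=X$. Coercivity of $H$ is then automatic from the decomposition \eqref{eq:decomposition_intro}: when $X_H=X$ the linear part can be absorbed into the coercive part. So it suffices to check the hypotheses of that theorem for the jet $(h,\nabla h)$ on $A$ with subspace $X$.

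\textbf{Step 1.} Condition (C) holds because $h$ is convex and $\nabla h(y)\in\partial h(y)$ for $y\in A$. Continuity of $\nabla h$ on $A$ follows from the standard fact that the subdifferential of a convex function is outer semicontinuous. Hence at differentiability points the gradients $\nabla h(y_k)$, whose limit points lie in $\partial h(y)=\{\nabla h(y)\}$, converge to $\nabla h(y)$. Local boundedness of $\partial h$ makes this work on bounded sets.

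\textbf{Step 2.} This step is the corners-at-infinity condition in directions orthogonal to the ambient space. Here $P_X$ is the identity, so bounded $(P_X(x_j))_j$ means $(x_j)_j$ is bounded. Suppose $h(x_j)-h(z_j)-\langle\nabla h(z_j),x_j-z_j\rangle\to0$ with $(x_j)_j$ bounded. I will argue that $(z_j)_j$ is also bounded, using coercivity of $h$. By the bound \eqref{eq:function_c_cone}-type estimate $h(y)\ge a|y|+b$, if $|z_j|\to\infty$ then $h(z_j)\to\infty$, while $h(x_j)$ stays bounded. Then the tangent plane at $z_j$, evaluated at $x_j$, would be far above $h(x_j)$. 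To make this precise I will evaluate at $0$: $h(0)\ge h(z_j)+\langle\nabla h(z_j),-z_j\rangle$, together with $h(z_j)\geq a|z_j|+b$.

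Passing to subsequences, $x_j\to x$ and $z_j\to z\in A$, and $\nabla h(z_j)\to\xi\in\partial h(z)$, with $\xi=\nabla h(z)$. In the limit, $h(x)=h(z)+\langle\nabla h(z),x-z\rangle$, so $h$ is affine on $[z,x]$ with slope $\nabla h(z)$. A standard convexity argument then gives $\nabla h(x)=\nabla h(z)$, and by continuity $|\nabla h(x_j)-\nabla h(z_j)|\to0$.

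\textbf{Step 3.} Since the subspace is all of $X$ and $Y\subseteq X$ trivially, the cone condition \ref{condition:existencecones} is the one remaining issue: $Y=\mathrm{span}\{\nabla h(x)-\nabla h(y)\}$ may be smaller than $X$. I expect this to be the main obstacle. I will bypass it by enlarging $A$ rather than verifying cones. Following \cite[Lemma 4.6 ff.]{AM19APDE}, the cleanest route is to first extend by $H_0\in C^1(X)$ convex with $H_0=h$, $\nabla H_0=\nabla h$ on $A$, via the bounded-set/general theorem \cite[Theorem 1.13]{AM19APDE} with the prescribed subspace $Y$, after checking the cone condition trivially since then $Y=X_{\text{prescribed}}$. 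I will then add coercivity by setting
$$H=\max\{H_0,\ \psi\},$$
smoothed, where $\psi$ is a $C^1$ convex coercive function with $\psi<h$ on a neighborhood of $A$, for example $\psi=\frac12(a|\cdot|+b)-K$ suitably smoothed near $0$.

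The delicate point is ensuring $\psi<H_0$ near $A$ while keeping $H$ coercive. I plan to use $h\ge a|y|+b$ together with the fact that $H_0$, being affine in directions where $h$ might be flat, still dominates $\psi$ on $A$. The smoothing of the maximum will be done with a standard $C^1$ smooth-maximum that coincides with $H_0$ wherever $H_0\ge\psi+1$, which includes a neighborhood of $A$.
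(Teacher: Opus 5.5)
\textbf{There is a genuine gap: your route never produces the preliminary extension $H_0$, which is the real content of the lemma.} The paper gives no argument here and simply refers to \cite[Lemma 6.5]{AM19APDE}.

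Your smooth-maximum step is fine. Once some convex $H_0\in C^1(X)$ with $(H_0,\nabla H_0)=(h,\nabla h)$ on $A$ exists, a $C^1$ convex smooth maximum of $H_0$ and a $C^1$ convex coercive $\psi\le h-1$ gives $H$. The problem is how you get $H_0$. Prescribing the subspace $Y$ in \cite[Theorem 1.13]{AM19APDE} does not remove a hypothesis. It trades the cone condition for $(\mathrm{CW}^1_Y)$, which concerns all sequences with merely $P_Y(x_j)$ bounded. Your Step 2 only treats bounded $(x_j)$, and $(\mathrm{CW}^1_Y)$ fails in general. Take $X=\R^2$, $h(x)=|x_1|+|x_2|$ and $A=\{(\pm e^{-t},t): t\ge 1\}$:
\begin{itemize}
\item $A$ is closed, and $h$ is differentiable on $A$ with $\nabla h=(\pm1,1)$ there, so $Y=\R e_1$.
\item For $x_j=(e^{-j},j)$ and $z_j=(-e^{-j},j)$, one gets $h(x_j)-h(z_j)-\langle\nabla h(z_j),x_j-z_j\rangle=2e^{-j}\to0$, while $|\nabla h(x_j)-\nabla h(z_j)|=2$.
\item Directly: any $C^1$ function $H_0=c\circ P_Y+\langle u,\cdot\rangle$ has $\partial_1H_0(x)=c'(x_1)+u_1$. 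By continuity of $c'$ at $0$, this cannot equal $1$ at $x_1=e^{-j}$ and $-1$ at $x_1=-e^{-j}$ for all $j$.
\end{itemize}
So in this example your route yields no $H_0$.

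\textbf{Two intermediate claims are also wrong.}
\begin{itemize}
\item The opening assertion that $X_H=X$ forces coercivity is false: $t\mapsto 2t+\sqrt{1+t^2}$ on $\R$ is a counterexample. Your smooth maximum makes this claim unnecessary anyway.
\item In Step 2, $(z_j)$ need not be bounded. For the Huber function ($|t|$ if $|t|\ge1$, $(1+t^2)/2$ otherwise, $A=\R$), $x_j=2$ and $z_j=j$ give zero gap. Also, a tangent plane can never lie above $h$, so that justification cannot work. The correct argument is this. First, $(\nabla h(z_j))$ is bounded, because the affine minorants $h(z_j)+\langle\nabla h(z_j),\cdot-z_j\rangle$ nearly touch $h$ at the bounded points $x_j$. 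Second, any limit $\xi$ of $(\nabla h(z_j))$ lies in $\partial h(x_0)=\{\nabla h(x_0)\}$, where $x_0=\lim x_j\in A$. Continuity of $\nabla h$ on $A$ then concludes.
\end{itemize}

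\textbf{How to repair it.} The \emph{enlarge $A$} idea you mention but never carry out is what works. Since $h$ is coercive, $X_h=X$. By \eqref{eq:formula_Xequalspan_subdifferentials}, and since subgradients are convex combinations of limits of gradients, you can add finitely many differentiability points of $h$ to $A$ so that the gradient differences span $X$. Then:
\begin{itemize}
\item the new $Y$ equals $X$, so the cone condition is vacuous;
\item $(\mathrm{CW}^1_X)$ is exactly the corrected Step 2;
\item \cite[Theorem 1.13]{AM19APDE} with subspace $X$, followed by your smooth maximum, yields $H$.
\end{itemize}
Be aware that in \cite{AM19APDE} this lemma is an ingredient in proving the prescribed-$X_F$ extension theorems, exactly as it is used to build $\widetilde F$ in Section \ref{sect:proofmain}. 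Deducing it from \cite[Theorem 1.13]{AM19APDE} is therefore circular unless that theorem is established independently.
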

\begin{proof}
See \cite[Lemma 6.5]{AM19APDE}.
\end{proof}

Since $c^*:X \to \R$ is convex and coercive, Lemmas \ref{lem:differentiabiltyclosurec^*lipschitz} and \ref{extensionconvexcoercivelipschitz} give a convex and coercive function $H \in C^1(X)$ with $(H, \nabla H)=(c^*, \nabla c^*)$ on $\overline{P_X(E^*)}$. We define
\begin{equation}\label{eq:def_extension_notsharp}
    \widetilde{F}(x) = H (P_X(x)) + \langle v^*, x \rangle, \quad x\in \R^n. 
\end{equation}
The function $\widetilde{F}$ is $C^1(\R^n)$ and convex, because $H:X \to \R$ is convex and of class $C^1(X)$. Since $H(y)=c^*(y)$ for $y\in P_X(E^*)$, the formula \eqref{eq:decomposition_minimal*} tells us that, whenever $x\in E^*,$
$$
\widetilde{F}(x)= H(P_X(x))+\langle v^*, x \rangle = c^* (P_X(x))+\langle v^*, x \rangle=m^*(x)=f^*(x).
$$
And from Lemma \ref{lem:differentiabiltyclosurec^*lipschitz} and again \eqref{eq:decomposition_minimal*}, we see that for all $x\in E^*$
$$
\nabla \widetilde{F}(x)= \nabla H (P_X(x)) + v^* = G^*(x)-v^* + v^* = G^*(x).
$$

\subsection{The sharp $C^1$ convex extension}\label{subsect:finalextension} We finally define the function
\begin{equation}\label{eq:final_extension_F}
    F(x) := \inf\lbrace \widetilde{F}(y) + L |x-y| \: : \: y\in \R^n \rbrace  \quad \text{ for all } \:  x\in \R^n.
\end{equation}

In this section we prove that $F$ is the desired extension of Theorem \ref{thm:maintheorem}. We will in fact prove in the next lemma that $(F, \nabla F)=(f^*,G^*)$ on the extended set $E^*$, which in turn implies $(F, \nabla F) = (f,G)$ on $E$, as per \eqref{eq:def_newjet}.

\begin{lemma}\label{lem:firstproperties_finalextension}
 The formula \eqref{eq:final_extension_F} defines a convex and Lipschitz function $F: \R^n \to \R$ with $\lip(F) \leq L$ and $m^* \leq F \leq \widetilde{F}$ in $\R^n.$ In particular, $F=f^*$ on $E$ and $F$ is differentiable on $E$ with $\nabla F(y) = G^*(y) $ for all $y\in E^*.$
\end{lemma}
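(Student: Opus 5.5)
The plan is to treat $F$ as an inf-convolution of the convex function $\widetilde{F}$ with the convex function $L|\cdot|$. The order is: finiteness and the sandwich $m^*\le F\le\widetilde F$ first, then convexity and the Lipschitz bound, then agreement with $f^*$ and differentiability on $E^*$ by squeezing.

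Taking $y=x$ in \eqref{eq:final_extension_F} gives $F\le\widetilde F$ at once. For the lower bound, I will show $m^*\le\widetilde F$ on $\R^n$. Since $\widetilde F$ is convex, $C^1$, and satisfies $(\widetilde F,\nabla\widetilde F)=(f^*,G^*)$ on $E^*$, for every $z\in E^*$ and $y\in\R^n$ we have
\[
\widetilde F(y)\ge f^*(z)+\langle G^*(z),y-z\rangle .
\]
Taking the supremum over $z\in E^*$ gives $\widetilde F\ge m^*$. Next, $m^*$ is $L$-Lipschitz, which follows from Lemma \ref{lemm:properties_newjets}(2) as observed in Section \ref{subsect:secondminimal}. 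Hence for all $x,y$,
\[
\widetilde F(y)+L|x-y|\ge m^*(y)+L|x-y|\ge m^*(x).
\]
Taking the infimum over $y$ yields $F\ge m^*$. In particular $F$ is finite everywhere.

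Convexity is the standard fact that an inf-convolution of two convex functions is convex. Given $x_1,x_2$, $\lambda\in[0,1]$ and near-optimal $y_1,y_2$, the point $\lambda y_1+(1-\lambda)y_2$ is admissible for $\lambda x_1+(1-\lambda)x_2$, and the joint convexity of $(x,y)\mapsto\widetilde F(y)+L|x-y|$ gives the inequality. For the Lipschitz bound, $F$ is an infimum of the $L$-Lipschitz functions $x\mapsto\widetilde F(y)+L|x-y|$ and is finite, so $\lip(F)\le L$.

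On $E^*$ we have $m^*=f^*=\widetilde F$, so the sandwich forces $F=f^*$ there. For differentiability at $y\in E^*$, I squeeze $F$ between two functions that touch at $y$. The function $F-m^*\ge0$ vanishes at $y$, and $F-m^*\le\widetilde F-m^*$. Since $G^*(y)\in\partial m^*(y)$, we get
\[
m^*(x)\ge f^*(y)+\langle G^*(y),x-y\rangle .
\]
Since $\widetilde F$ is differentiable at $y$ with gradient $G^*(y)$,
\[
\widetilde F(x)=f^*(y)+\langle G^*(y),x-y\rangle+o(|x-y|).
\]
Combining, $f^*(y)+\langle G^*(y),x-y\rangle\le F(x)\le f^*(y)+\langle G^*(y),x-y\rangle+o(|x-y|)$. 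This means $F$ is differentiable at $y$ with $\nabla F(y)=G^*(y)$.

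I expect no real obstacle here. The only delicate point is the lower bound $m^*\le F$, which relies on $m^*$ being exactly $L$-Lipschitz; this is why Lemma \ref{lemm:properties_newjets}(2) was needed.
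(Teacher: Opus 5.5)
Your proposal is correct and follows the paper's proof essentially step for step. You get $m^*\le\widetilde F$ from the tangent-plane inequalities at points of $E^*$, then $m^*\le F$ from the $L$-Lipschitz property of $m^*$, convexity from the standard inf-convolution argument, and $F=f^*$ and $\nabla F=G^*$ on $E^*$ from the sandwich $m^*\le F\le\widetilde F$; your two-sided estimate even yields differentiability directly, without the paper's closing appeal to the convexity of $F$.
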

\begin{proof}
As we have shown in Section \ref{subsect:notsharpextension}, $(\widetilde{F}, \nabla \widetilde{F})=(f^*,G^*)$ on $E^*$, and so, by the convexity of $\widetilde{F}$, we have that
$$
\widetilde{F}(x) \geq \widetilde{F}(y) + \langle \nabla \widetilde{F}(y), x-y \rangle = f^*(y)+ \langle G^*(y), x-y \rangle 
$$
for every $x,y\in E^*$. By formula \eqref{eq:definition_secondminimal}, this inequality gives that $\widetilde{F} \geq m^*$ in $\R^n$. As pointed out in \eqref{eq:definition_secondminimal}, $m^*$ is $L$-Lipschitz, and therefore
$$
\widetilde{F} (y) + L |x-y| \geq m^*(y) + L |x-y| \geq m^*(x)
$$
for all $x,y\in \R^n.$ This implies that $F$ is well-defined, $F \geq m^*$ in $\R^n$, and $F$ is $L$-Lipschitz, as finite infimum of $L$-Lipschitz functions. Using the convexity of $\widetilde{F},$ a standard argument for inf-convolutions shows that $F$ is convex as well. We include its proof for the completeness. Given points $x,z\in \R^n ,$ $\lambda \in [0,1],$ and $\varepsilon >0,$ we can $y_x, y_z \in \R^n$ with
$$
F(x) \geq \widetilde{F}(y_x) + L|x-y_x| - \varepsilon, \quad F(z) \geq \widetilde{F}(y_z)  + L|z-y_z| -\varepsilon.
$$
By the definition of $F(\lambda  x + (1-\lambda)  z)$, the convexity $F$ in $X$ and the choices of $y_x$ and $y_z$ lead us to the following chain of inequalities.
\begin{align*}
F(\lambda  x + (1-\lambda) z) & \leq \widetilde{F}(\lambda y_x + (1-\lambda) y_z ) + L | \lambda  x + (1-\lambda)  z- (\lambda  y_x + (1-\lambda) y_z) |\\
& \leq \lambda \widetilde{F}(y_x) + (1-\lambda) \widetilde{F}(y_z) + L \lambda |x-y_x| + L (1-\lambda) |z-y_z| \\
& \leq \lambda  ( \widetilde{F}(y_x) + L|x-y_x |  ) + (1-\lambda) ( \widetilde{F}(y_z) + L|z-y_z|  ) \\
& \leq \lambda (F(x)+\varepsilon) + (1-\lambda) ( F(z) + \varepsilon ) \\
& = \lambda F(x) + (1-\lambda) F(z) + \varepsilon.
\end{align*}
Letting $\varepsilon \to 0$, the above shows that $F$ is convex. 

\smallskip

The inequality $F \leq \widetilde{F}$ is obvious from \eqref{eq:final_extension_F}. Let us prove that $(F, \nabla F)=(f^*,G^*)$ on $E^*$. We showed in Section \ref{subsect:secondminimal} that $m^*=f^*$ on $E^*$ and $G^*(y) \in \partial m^*(y)$ for all $y\in E^*$, and in Section \ref{subsect:notsharpextension} that $(\widetilde{F}, \nabla \widetilde{F} ) =(f^*,G^*)$ on $E^*. $ As $m^* \leq F \leq \widetilde{F},$ this immediately implies that $F=f^*$ on $E^*$. Moreover, for $y\in E^*$, and $x\in \R^n$, we have
\begin{align*}
0 & \leq  \frac{m^*(x)-f^*(y)- \langle G^*(y), x-y \rangle}{|x-y|} \\
& \leq \frac{F(x)-f^*(y)-\langle G^*(y), x-y \rangle}{|x-y|} \\
& \leq \frac{\widetilde{F}(x)-f^*(y)-\langle G^*(y), x-y \rangle}{|x-y|} ,
\end{align*}
where the last term tends to $0$ as $|x-y| \to 0^+.$ By the convexity of $F,$ we may conclude that $F$ is differentiable at $y$ and $\nabla F(y)=G^*(y)$ for each $y\in  E^*.$
\end{proof}

We next prove that $F$ has the desired global behavior. Recall that $X_F$ is the unique linear subspace of $\R^n$ so that $F$ can be written as $F(x) = C(P_{X_F}(x)) +  \langle u,x \rangle$, $x\in \R^n$, with $C: X_F \to \R$ convex and coercive in $X_F$ and $u \in \R^n.$

\begin{lemma}\label{lem:properties_coerc_finalextension}
The function $F: \R^n \to \R$ satisfies $X_F=X.$
\end{lemma}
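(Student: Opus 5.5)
We aim to identify $X_F$ through formula \eqref{eq:formula_X_F_intro}, i.e.\ through Lemma \ref{lem:properties_subdifferentials_decompositions}, which gives $X_F=\mathrm{span}\lbrace \xi_x-\xi_y : \xi_x\in\partial F(x),\ \xi_y\in \partial F(y)\rbrace$. Note first that $F$ is not affine: by Lemma \ref{lem:firstproperties_finalextension}, $\nabla F=G^*$ on $E^*$, and $G^*$ is non-constant. So $F$ has a decomposition $F=C\circ P_{X_F}+\langle u,\cdot\rangle$ with $C$ coercive. The inclusion $X\subseteq X_F$ then follows at once: $\nabla F(y)=G^*(y)\in\partial F(y)$ for $y\in E^*$, and Lemma \ref{lemm:properties_newjets}(1) gives $X=\mathrm{span}\lbrace G^*(x)-G^*(y): x,y \in E^*\rbrace$.

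For the reverse inclusion we show that $F$ is affine along the directions of $X^\perp$ with slope $v^*$. Precisely, we claim
\[
F(x+w)=F(x)+\langle v^*,w\rangle \quad\text{for all } x\in\R^n,\ w\in X^\perp .
\]
This follows from the definition \eqref{eq:final_extension_F} and from \eqref{eq:def_extension_notsharp}, which gives $\widetilde F(y+w)=\widetilde F(y)+\langle v^*,w\rangle$ for $w\in X^\perp$. Substituting $y\mapsto y+w$ in the infimum,
\[
F(x+w)=\inf_y\lbrace \widetilde F(y)+\langle v^*,w\rangle+L|x-y|\rbrace=F(x)+\langle v^*,w\rangle .
\]
Hence every $\eta\in\partial F(x)$ satisfies $\langle\eta,w\rangle=\langle v^*,w\rangle$ for all $w\in X^\perp$. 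To see this, apply the subgradient inequality along the line $x+tw$, on which $F$ is affine with slope $\langle v^*,w\rangle$, for both signs of $t$. Thus $\eta-v^*\in X$, all differences of subgradients lie in $X$, and $X_F\subseteq X$ by Lemma \ref{lem:properties_subdifferentials_decompositions}.

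Alternatively, one can argue via coercivity. Write $F=c_F\circ P_X+\langle v^*,\cdot\rangle$ with $c_F(z):=F(z)-\langle v^*,z\rangle$ on $X$. Then $c_F\ge m^*-\langle v^*,\cdot\rangle=c^*$ on $X$ by Lemma \ref{lem:firstproperties_finalextension}, and $c^*$ is coercive by \eqref{eq:decomposition_minimal*}. So $c_F$ is a convex coercive function, and uniqueness of the decomposition yields $X_F=X$ directly. I expect no real obstacle here: the key inputs are $m^*\le F$, which depends on $m^*$ being exactly $L$-Lipschitz, and the translation invariance of the inf-convolution in directions of $X^\perp$. The only care needed is to make sure that the uniqueness of $X_F$ (equivalently the span formula of Lemma \ref{lem:properties_subdifferentials_decompositions}) is applied to a genuinely non-affine function, which we have checked above.
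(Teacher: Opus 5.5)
Your proof is correct. The inclusion $X\subseteq X_F$ is exactly the paper's argument: $\nabla F=G^*$ on $E^*$, Lemma \ref{lemm:properties_newjets}(1), and the span formula \eqref{eq:formula_Xequalspan_subdifferentials}. Your explicit check that $F$ is non-affine supplies a hypothesis the paper uses tacitly.

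For $X_F\subseteq X$ you take a different route. The paper uses only the one-sided bound $F\le\widetilde F$. It writes $C(tP_Z(w))\le \widetilde F(tw)-\langle u,w\rangle t$ for $w\in X^\perp$, and coercivity of $C$ on $Z=X_F$ then forces $P_Z(w)=0$. You instead use the translation invariance of the inf-convolution \eqref{eq:final_extension_F} to get the exact identity $F(x+w)=F(x)+\langle v^*,w\rangle$ for $w\in X^\perp$, and read off $\partial F(x)\subset v^*+X$.

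The paper's argument is more robust: it works for any convex function lying below something affine along the lines $\{tw\}$, $w\in X^\perp$. Yours depends on the specific form of $F$ as an inf-convolution of $\widetilde F$ with $L|\cdot|$, but it gives more. It produces the explicit decomposition $F=c_F\circ P_X+\langle v^*,\cdot\rangle$ with the same vector $v^*$ as $\widetilde F$ and $m^*$. Combined with $c_F\ge c^*$ (from $m^*\le F$), this yields $X_F=X$ in one step from uniqueness of the decomposition, without the span formula at all. This second argument is essentially the ``similar argument'' via $m^*\le F$ that the paper mentions in passing for the reverse inclusion.
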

\begin{proof}
Let us denote $Z= X_F$. By Theorem \ref{thm:decomposition_controlleddirection} we can write $F(x) = C(P_{Z}(x)) +  \langle u,x \rangle$, $x\in \R^n$, with $C: Z \to \R$ convex and coercive in $X_F$ and $u \in \R^n.$
\\
Now, for every $w\in X^\perp$, the function $\widetilde{F}$ is affine along the line generated by $w$ by virtue of formula \eqref{eq:def_extension_notsharp}. But since $F \leq \widetilde{F}$ in $\R^n$ we have that
$$
C(t P_Z(w) ) =F(tw)- \langle u, w \rangle t \leq \widetilde{F}(tw)- \langle u, w \rangle t , \quad t\in \R.
$$
Since $C$ is coercive in $Z$ and $\R \ni t\mapsto \widetilde{F}(tw)$ is affine by the above, this inequality imply that necessarily $P_Z(w)=0,$ that is $w\in Z^\perp.$ We have therefore shown the inclusion $Z \subset X.$

\smallskip

The reverse inclusion $X \subset Z$ can be shown via the inequality $m^* \leq F$ and a similar argument as above, or using formulas \eqref{eq:formulas_subspace_vector_maxfunctions} and \eqref{eq:formula_Xequalspan_subdifferentials} of Lemma \ref{lem:properties_subdifferentials_decompositions} and the fact that $\nabla F = G^*$ on $E^*$: 
$$
X = \mathrm{span}\lbrace G^*(x)-G^*(y) \, : \, x,y\in E^* \rbrace \subseteq \mathrm{span}\lbrace \xi- \eta \: : \: \xi \in \partial F(x), \, \eta \in \partial F(y), \, x,y\in \R^n \rbrace =Z.  
$$
\end{proof}

To complete the proof of Theorem \ref{thm:maintheorem}, it remains to prove that $F \in C^1(\R^n)$. 

\begin{lemma}\label{lem:finalextension_C1}
The function $F$ is of class $C^1(\R^n)$.
\end{lemma}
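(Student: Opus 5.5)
Since $F$ is convex (Lemma \ref{lem:firstproperties_finalextension}), continuity of $\nabla F$ comes for free once differentiability everywhere is known. So it suffices to show that $\partial F(x)$ is a singleton for every $x\in\R^n$. Fix $x$. The plan is to find a minimizing sequence for the infimum in \eqref{eq:final_extension_F} that behaves well, and then squeeze $F$ from above by smooth functions touching it at $x$.

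\textbf{Step 1: bounded projections of near-minimizers.} Using \eqref{eq:def_extension_notsharp}, write
$$\widetilde F(y)+L|x-y| = H(P_X y)+\langle v^*,y\rangle + L|x-y|.$$
Because $|v^*|<L$, we have $\langle v^*,y-x\rangle + L|x-y| \ge (L-|v^*|)|x-y|\ge 0$. Combined with the coercivity of $H$, this shows that any sequence $(y_k)_k$ with $\widetilde F(y_k)+L|x-y_k|\to F(x)$ has $(P_X(y_k))_k$ bounded. In fact $(y_k)_k$ itself is bounded, since $H\circ P_X$ is bounded below and the term $(L-|v^*|)|x-y_k|$ controls $|y_k-x|$. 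Passing to a subsequence, $y_k\to y_x$, and by continuity $y_x$ attains the infimum:
$$F(x)=\widetilde F(y_x)+L|x-y_x|.$$
Note that the strict inequality $|v^*|<L$ coming from Theorem \ref{thm:decomposition_controlleddirection} is exactly what makes the infimum attained.

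\textbf{Step 2: differentiability via touching from above.} Split into two cases.

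\emph{Case $y_x=x$.} Then $F(x)=\widetilde F(x)$ and $F\le\widetilde F$ everywhere. Since $F$ is convex and touched from above at $x$ by the differentiable function $\widetilde F$, $F$ is differentiable at $x$, with $\nabla F(x)=\nabla\widetilde F(x)$. This is the same argument as at the end of Lemma \ref{lem:firstproperties_finalextension}.

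\emph{Case $y_x\neq x$.} For every $z$ we have
$$F(z)\le \widetilde F(y_x+z-x)+L|z-y_x|.$$
The right-hand side is differentiable at $z=x$, because $\widetilde F$ is $C^1$ and $|\cdot-y_x|$ is smooth away from $y_x$, and it equals $F(x)$ at $z=x$. Touching from above again gives differentiability of $F$ at $x$.

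\textbf{Where the difficulty lies.} Once attainment is established, this route seems to avoid any corners-at-infinity analysis. The step where I expect the difficulty (as the authors hint) is therefore making Step 1 airtight. If attainment failed, for instance if only $|v^*|\le L$ were available, one would instead have to control minimizing sequences with $|y_k|\to\infty$ but $(P_X y_k)_k$ bounded. In that situation I would use that $\widetilde F$ has no corners at infinity orthogonal to $X$, since $X_{\widetilde F}=X$, in order to estimate the second-order finite differences
$$F(x+h)+F(x-h)-2F(x)\le \widetilde F(y_k+h)+\widetilde F(y_k-h)-2\widetilde F(y_k)+o(|h|)$$
uniformly in $k$. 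With $|v^*|<L$ in hand, though, the direct compactness argument should suffice. Finally, $C^1$ regularity follows from the standard fact that a convex function differentiable everywhere on $\R^n$ has a continuous gradient.
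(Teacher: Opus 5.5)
Your overall route is sound and genuinely different from the paper's, but the displayed inequality in your case $y_x\neq x$ is false as written. Taking $y=y_x+z-x$ in \eqref{eq:final_extension_F} gives $F(z)\le \widetilde F(y_x+z-x)+L|x-y_x|$. Taking $y=y_x$ gives $F(z)\le \widetilde F(y_x)+L|z-y_x|$. Your hybrid $\widetilde F(y_x+z-x)+L|z-y_x|$ is not an upper bound for $F$.

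For example, take $\widetilde F(y)=y^2$ on $\R$, $L=1$ and $x=1$. Then $y_x=1/2$ and $F(z)=z-1/4$ for $z\ge 1/2$, and at $z=0.9$ your right-hand side equals $0.56<0.65=F(0.9)$. Either correct bound finishes the argument. The first one is $C^1$ in $z$ and equals $F(x)$ at $z=x$ whether or not $y_x=x$. So it removes the case distinction altogether and gives $\nabla F(x)=\nabla\widetilde F(y_x)$.

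With this repair, your Step 1 is correct. $H$ is bounded below and $\langle v^*,y-x\rangle+L|x-y|\ge(L-|v^*|)|x-y|$. Hence the strict bound $|v^*|<L$ from \eqref{eq:decomposition_minimal*} makes $y\mapsto\widetilde F(y)+L|x-y|$ coercive on $\R^n$, and the infimum is attained.

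The paper never uses attainment. Its steps are:
\begin{enumerate}
\item It takes approximate minimizers $y_k$ with error $|h_k|/k$.
\item It uses only $\langle v^*,y_k\rangle\ge -L|y_k|$ plus coercivity of $H$ to get $(P_X(y_k))_k$ bounded.
\item It plugs in the translated competitors $y_k\pm h_k$ (the same translation as in the corrected bound above). This dominates the symmetric second difference of $F$ at $x_0$ by that of $H$ at $P_X(y_k)$ in the direction $P_X(h_k)$.
\item It concludes from the continuity of $\nabla H$ at a limit point of $(P_X(y_k))_k$.
\end{enumerate}
This is essentially the fallback you sketch in your last paragraph. The ``no corners at infinity'' property enters only through the structure $\widetilde F=H\circ P_X+\langle v^*,\cdot\rangle$.

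Your version is shorter and identifies $\nabla F(x)$ explicitly. The paper's is more robust, since it works with merely $|v^*|\le L$, where minimizing sequences may escape to infinity along $X^\perp$.
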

\begin{proof}
By the convexity of $F,$ it suffices to show that $F$ is differentiable at every $x\in \R^n.$ Moreover, the differentiability of $F$ at $x_0 \in \R^n$ is equivalent to showing that
\begin{equation}\label{eq:criteriaDiff_forC1conv}
\lim_{h \to 0} \frac{F(x_0+h) + F(x_0-h)-2F(x_0)}{|h|}=0.
\end{equation}
To prove \eqref{eq:criteriaDiff_forC1conv}, let $(h_k)_k \subset \R^n \setminus \lbrace 0 \rbrace$ be a sequence with $|h_k| \downarrow 0.$ By the definition of $F(x_0),$ we can find a sequence $(y_k)_k \subset \R^n$ so that 
\begin{equation}\label{eq:approximation_infimum_F(x0)}
 \widetilde{F}(y_k) + L |x_0-y_k| \leq F(x_0) + \frac{|h_k|}{k}, \quad k\in \N.
\end{equation}
In particular, using \eqref{eq:def_extension_notsharp} we have 
$$
H(P_X(y_k)) + \langle v^*, y_k \rangle + L|x_0-y_k| \leq F(x_0) + \frac{|h_k|}{k} \, \text{ for all } \, k;
$$
where $|v^*| <L$ and $H:X \to \R$ is convex, coercive and $C^1(X).$ It then follows that
\begin{align*}
H(P_X(y_k)) -L |y_0| & \leq H(P_X(y_k)) - L |y_k| + L |x_0-y_k| \\ 
& \leq H(P_X(y_k)) + \langle v^*, y_k \rangle + L|x_0-y_k| \leq F(x_0) + \frac{|h_k|}{k}.
\end{align*}
The coercivity of $H$ implies that the sequence $(P_X(y_k))_k$ is bounded. The definitions of $F(x_0 \pm h_k)$ give
$$
F(x_0 + h_k) \leq \widetilde{F}(y_k + h_k) + L |x_0 + h_k - (y_k + h_k) | = H(P_X(y_k+h_k)) + \langle v^*, y_k + h_k \rangle + L |x_0-y_k| ,
$$
$$
F(x_0 - h_k) \leq \widetilde{F}(y_k - h_k) + L |x_0 - h_k - (y_k - h_k) | = H(P_X(y_k-h_k)) + \langle v^*, y_k - h_k \rangle + L |x_0-y_k| .
$$
In combination with \eqref{eq:approximation_infimum_F(x0)}, these inequalities yield the estimates
\begin{align*}
F(x_0+h_k) & +  F(x_0-h_k)-2F(x_0) \\
& \leq H(P_X(y_k+h_k)) + H(P_X(y_k-h_k)) + 2 \langle v^*, y_k \rangle +2 L |x_0-y_k| - 2F(x_0) \\
& \leq H(P_X(y_k+h_k)) + H(P_X(y_k-h_k)) + 2 \langle v^*, y_k \rangle - 2 \widetilde{F}(y_k) + \frac{2|h_k|}{k} \\
& \leq H(P_X(y_k+h_k)) + H(P_X(y_k-h_k)) - 2 H(P_X(y_k)) + \frac{2|h_k|}{k}.
\end{align*}
Writing $z_k = P_X(y_k)$ and $\xi_k = P_X(h_k)$, we have $|h_k| \leq |\xi_k|$ and the above inequalities yield
\begin{equation}\label{eq:estimate_limit_F_H}
\frac{F(x_0+h_k)   +  F(x_0-h_k)-2F(x_0)}{|h_k|} \leq  \frac{H(z_k+\xi_k) + H(z_k-\xi_k) - 2 H(z_k)}{|\xi_k|} + \frac{2}{k}, \quad k \in \N,
\end{equation}
where $(z_k)_k \subset X$ is bounded and $|\xi_k| \to 0.$ We claim that 
\begin{equation}\label{lem:limit_H_equal0}
 \lim_{k \to \infty}    \frac{H(z_k+\xi_k) + H(z_k-\xi_k) - 2 H(z_k)}{|\xi_k|} = 0.
\end{equation}
Indeed, assume, for the sake of contradiction, that there is $\varepsilon>0$ and subsequences of $(z_k)_k$ and $(\xi_k)_k$ (which we keep denoting by $(z_k)_k$ and $(\xi_k)_k$) so that
$$
\frac{H(z_k+\xi_k) + H(z_k-\xi_k) - 2 H(z_k)}{|\xi_k|} \geq \varepsilon \: \text{ for all } \: k \in \N.
$$
Passing to a further subsequence we may also assume that $(z_k)_k \to z_0 \in X.$ By the convexity and differentiability of $H$ in $X$, we have the inequalities
\begin{align*}
  H(z_k) \geq  H(z_k + \xi_k) - \langle \nabla H(z_k+ \xi_k), \xi_k \rangle \\
H(z_k) \geq  H(z_k - \xi_k) + \langle \nabla H(z_k- \xi_k), \xi_k \rangle, 
\end{align*}
which in combination with the previous $\varepsilon$-lower bound yield
$$
\varepsilon \leq \frac{\langle \nabla H(z_k + \xi_k)- \nabla H(z_k-\xi_k), \xi_k \rangle}{|\xi_k|} \leq | \nabla H(z_k + \xi_k)- \nabla H(z_k-\xi_k)|, \quad k\in \N.
$$
Since $H \in C^1(X)$, the gradient $\nabla H  :X \to X$ is continuous, and since $(z_k)_k$ converges to $z_0,$ and $(\xi_k)_k \to 0,$ the last term converges to $0$, as $k \to \infty,$ a contradiction. This shows that \eqref{eq:estimate_limit_F_H} holds, which, in combination with \eqref{eq:estimate_limit_F_H}, proves \eqref{eq:criteriaDiff_forC1conv}, as desired. 
\end{proof}

Concerning the proof of Theorem \ref{thm:maintheorem}, we note the following. 
\begin{remark}\label{rem:1+epsilon}
    {\em
Assume that $(f,G)$ satisfies the conditions \ref{condition:convexity}, \ref{condition:subspacecontained}, \ref{condition:existencecones}, \ref{condition:cornersorthogonal} on $E$ and let $L  = \sup_{y\in E} |G(y)|.$ If we were interested in obtaining a convex and Lipschitz extension $F\in C^1(\R^n)$ of $(f,G)$ that \textit{merely} satisfies    
    $$
    \lip(F) \leq \varepsilon + \sup_{y\in E} |G(y)| ,
    $$
    for $\varepsilon>0$, and with $X_F$ not necessarily prescribing a given global behavior $X \subset \R^n$, it would suffice to consider a $C^1(\R^n)$ convex extension $\widetilde{F}$ of $(f,G),$ (e.g. applying directly \cite[Theorem 1.14]{AM19APDE} to $(f,G)$) and then define
    $$
    F(x) = \inf\lbrace \widetilde{F}(y)+ (L + \varepsilon) |x-y| \, : \, y\in \R^n \rbrace, \quad x\in \R^n.
    $$
This can be seen by inspection of the proof Lemma \ref{lem:finalextension_C1}, which would be easier in that case. It is our purpose to construct an extension $F$ with the sharp constant $\lip(F) = L$ that has led us to the work in Sections \ref{sect:behavior} and \ref{sect:proofmain}.  
    }
\end{remark}

\section*{Acknowledgements}

I am supported by the Marie Skłodowska-Curie (MSCA-EF) European Fellowship, grant number 101151594, from the Horizon Europe Funding program. 

\medskip

\bibliographystyle{amsplain}
\bibliography{main}

\end{document}